\newcommand{\Nn}{\mathbb{N}}
\newcommand{\Zz}{\mathbb{Z}}
\newcommand{\Qq}{\mathbb{Q}} 
\newcommand{\Ff}{\mathbb{F}}
\renewcommand{\epsilon}{\varepsilon}
\renewcommand{\le}{\leqslant}
\renewcommand{\leq}{\leqslant}
\renewcommand{\geq}{\geqslant}
\theoremstyle{plain}
\newtheorem{theorem}{Theorem}[section]    
\newtheorem{lemma}[theorem]{Lemma}       
\newtheorem{proposition}[theorem]{Proposition}  
\newtheorem{corollary}[theorem]{Corollary}   
\theoremstyle{remark}
\newtheorem{remark}[theorem]{Remark}   
\begin{document}

\baselineskip=17pt 

\title{Specializations of indecomposable polynomials}

\author{Arnaud Bodin}
\email{Arnaud.Bodin@math.univ-lille1.fr}

\author{Guillaume Ch\`eze}
\email{guillaume.cheze@math.univ-toulouse.fr}

\author{Pierre D\`ebes}
\email{Pierre.Debes@math.univ-lille1.fr}

\address{Laboratoire Paul Painlev\'e, Math\'ematiques, Universit\'e 
Lille 1, 59655 Villeneuve d'Ascq Cedex, France}

\address{Institut de Math\'ematiques de Toulouse, Universit\'e 
Paul Sabatier Toulouse 3, 31062 Toulouse Cedex 9, France}

\address{Laboratoire Paul Painlev\'e, Math\'ematiques, Universit\'e 
Lille 1, 59655 Villeneuve d'Ascq Cedex, France}

\subjclass[2000]{12E05, 11C08}

\keywords{Irreducible and indecomposable polynomials.}

\date{\today}

\begin{abstract}
We address some questions concerning indecomposable 
polynomials and their behaviour under specialization. 
For instance we give a bound on a prime $p$ for the reduction modulo $p$ of an indecomposable polynomial $P(x)\in \Zz[x]$ to remain indecomposable. We also obtain a Hilbert like result for indecomposability: if $f(t_1,\ldots,t_r,x)$ is an indecomposable polynomial in several variables with coefficients in a field of characteristic $p=0$ or $p>\deg(f)$, then the one variable specialized 
polynomial $f(t_1^\ast+\alpha_1^\ast x,\ldots,t_r^\ast+\alpha_r^\ast x,x)$ is indecomposable for all 
$(t_1^\ast, \ldots, t_r^\ast, \alpha_1^\ast, \ldots,\alpha_r^\ast)\in \overline k^{2r}$ off a proper Zariski closed subset.
\end{abstract}

\maketitle

\section{Introduction} 
\label{sec:intro}

Let $x$ be an indeterminate. A non-constant polynomial $f(x)\in k[x]$ with coefficients in a field $k$ is said to be {\it decomposable} in $k[x]$ if it is of the form $u(g(x))$ with $g$ \underbar{and} $u$ in $k[x]$ of degree $\geq 2$, and {\it indecomposable} otherwise. 
For polynomials in several variables, the definition is slightly different: for an integer $n\geq 2$ and a $n$-tuple $\underline x=(x_1,\ldots,x_n)$ of indeterminates, a non-constant polynomial $f(\underline x)\in k[\underline x]$ is  {\it decomposable} in $k[\underline x]$ if it is of the form $u(g(\underline x))$ with $u\in k[t]$ of degree $\geq 2$ and $g(\underline x)\in k[\underline x]$; unlike for $n=1$, the case $\deg(g)=1$, $\deg(u)\geq 2$ is allowed. 

The central theme of the paper is the following general problem. Let $A$ be an integral domain with quotient field $K$ and  
$f(x)\in A[x] $ be an indecomposable polynomial in $K[x]$. Given a ring morphism 
$\sigma : A\rightarrow k$ with $k$ 
a field,  the question is whether the polynomial $f^\sigma(x)$ obtained by applying $\sigma$ to the coefficients of $f(x)$ is also indecomposable. 

We have a first general statement {\it \`a la Bertini-Noether} under the assumption 
that $\deg(f)$ is prime to the characteristic of $K$\footnote{{\it i.e.} $\hbox{\rm char}(K)$ equals $0$ or does not  divide $\deg(f)$.}: the answer is positive ``generically'', that is, for all $\sigma$ such that $I_f^\sigma \not=0$ where $I_f$ is some non-zero element of $A$ depending only on $f$
(proposition \ref{prop:morphism}). Based on a general decomposition result 
for polynomials, established in \cite{Bo}, our approach leads to quite explicit versions of the Bertini-Noether conclusion. For polynomials in several variables, similar conclusions had already been
proved (see \cite{BDN}, \cite{BCN}, \cite{ChezeNajib}); the single variable case is somewhat different.

We investigate further two typical situations. The first one is for $A=\Zz$ and $\sigma: \Zz \rightarrow {\Ff_p}$ a reduction morphism modulo $p$. The Bertini-Noether conclusion  
is here that $f^\sigma(x)$ is indecomposable if $p$ is suitably large. 
Our method leads to the following explicit version. To our knowledge no such 
bound as the one below was previously available. 

\begin{theorem} \label{thm:reduction}
Let $f(x) \in \Zz[x]$ be indecomposable in $\Qq[x]$. There exists a constant $\gamma_d$ depending only on $d = \deg(f)$ such that if $p > \gamma_d\hskip 2pt  \| f \|_\infty^d$ is a prime, the reduced polynomial $\overline f(x)$ modulo $p$ is indecomposable in $\overline \Ff_p[x]$.
\end{theorem}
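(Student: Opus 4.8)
The plan is to deduce the theorem from the Bertini--Noether statement of Proposition \ref{prop:morphism}, applied with $A=\Zz$, $K=\Qq$ and $\sigma$ the reduction morphism $\Zz\to\Ff_p$, and then to make the element $I_f$ produced there arithmetically explicit. Since $f$ is indecomposable in $\Qq[x]$ and $\|f\|_\infty\ge 1$, choosing $\gamma_d\ge d$ forces $p>\gamma_d\|f\|_\infty^{\,d}\ge d$, so $\charac(\Ff_p)$ is prime to $d=\deg(f)$ and Proposition \ref{prop:morphism} applies: there is a nonzero $I_f\in\Zz$ such that $\overline f$ is indecomposable in $\overline\Ff_p[x]$ whenever $\sigma(I_f)\ne 0$, that is, whenever $p\nmid I_f$. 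As every prime divisor of $I_f$ is at most $|I_f|$, the reduction stays indecomposable as soon as $p>|I_f|$. The whole theorem therefore reduces to the height estimate $|I_f|\le\gamma_d\,\|f\|_\infty^{\,d}$ for a constant $\gamma_d$ depending only on $d$.

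To obtain such an estimate I would unwind the construction of $I_f$ from the explicit decomposition result of \cite{Bo}. Write $f=a_d x^d+\cdots+a_0$; after incorporating $a_d$ as a factor of $I_f$ (which also guarantees $\deg\overline f=d$) we may work with the associated monic polynomial. For each factorization $d=em$ with $2\le e,m$, the candidate inner factor $g$ of a decomposition $f=u\circ g$ of type $(e,m)$ is rigidified as the degree-$m$ approximate $e$-th root of $f$, whose coefficients are universal polynomials in $a_{d-1},\ldots,a_0$ with denominators that are powers of $e$ (here the hypothesis $p>d\ge e$ guarantees these reductions are defined). Decomposability of type $(e,m)$ is then equivalent to the vanishing of the non-constant parts of the remainders in the $g$-adic expansion of $f$, each of which is a polynomial $P_{e,m}$ in the $a_j$ over $\Zz[1/e]$. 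Because $f$ is indecomposable, for every admissible $(e,m)$ at least one such $P_{e,m}$ does not vanish at $f$, and clearing the bounded denominators produces the integer $I_f$ afforded by Proposition \ref{prop:morphism}.

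The remaining, and main, task is to control the size of $I_f$. I would track two quantities through the construction: the total degree of $I_f$ as a polynomial in the coefficient variables $a_0,\ldots,a_d$, and the absolute values of its integer coefficients together with its number of monomials. Since the approximate-root coefficients are weighted-homogeneous of bounded weight in the $a_j$ and the $g$-adic remainders are produced by finitely many divisions governed by $d$, both the degree and the coefficient height of each $P_{e,m}$ are bounded by explicit functions of $d$ alone; the product over the finitely many factorizations $d=em$ (at most the number of divisors of $d$) and the clearing of the powers of $e\le d$ alter these bounds only by a factor depending on $d$. The elementary estimate $|I_f|\le(\text{height})\cdot(\#\text{monomials})\cdot\|f\|_\infty^{\deg I_f}$ for the value of an integer polynomial then yields $|I_f|\le\gamma_d\,\|f\|_\infty^{\,d}$, provided the coefficient-degree of $I_f$ does not exceed $d$. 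I expect this degree bookkeeping---verifying that the approximate-root and $g$-adic constructions keep the coefficient-degree at $d$ rather than some larger power, and that the denominators really are only powers of $e\le d$---to be the delicate point; everything else is either a direct appeal to Proposition \ref{prop:morphism} or a routine height computation.
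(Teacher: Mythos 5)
Your overall strategy coincides with the paper's: reduce to the monic case, run the approximate $m$-root decomposition $f=u(g)+h$ of \cite{Bo} for each nontrivial divisor $m$ of $d$, and control the integer coefficients by the weighted-degree bookkeeping in which $a_i$ and $b_i$ carry weight $i$ (the paper's ``$f$-tame'' induction through system $(\mathcal{S})$), arriving at $\|h\|_\infty\le\gamma_d\|f\|_\infty^d$ for each $m$. The ``delicate point'' you flag does close exactly as you hope, divisor by divisor.

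The genuine gap is in the step where you reduce the theorem to the height estimate $|I_f|\le\gamma_d\|f\|_\infty^{d}$. The element $I_f$ of Proposition \ref{prop:morphism} is, up to the factor $d\,a_0$, the \emph{product} over all nontrivial divisors $m$ of $d$ of a chosen nonzero coefficient $h_{m0}$ of $h_m$. Each factor is bounded by $\gamma_d\|f\|_\infty^{d}$, but the product is only bounded by a constant times $\|f\|_\infty^{d\nu}$, where $\nu$ is the number of nontrivial divisors of $d$; equivalently, the weighted degree of $I_f$ in $a_0,\dots,a_d$ is of size $d\nu$ rather than $d$, and it genuinely exceeds $d$ as soon as $d$ has several nontrivial divisors (e.g.\ $d=12$). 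So the estimate you reduce to is unattainable for that $I_f$, and your route as written only proves the theorem with exponent $d\nu$ in place of $d$. The repair is to not bound $|I_f|$ at all: you need $p\nmid I_f$, not $p>|I_f|$, and since $I_f$ is a product of nonzero integers each of absolute value at most $\gamma_d\|f\|_\infty^{d}$ (together with $d\,a_0$), any prime $p>\gamma_d\|f\|_\infty^{d}$ can divide none of the factors and hence not the product. Said differently, one argues for each divisor $m$ separately that some nonzero coefficient of $h_m$ survives reduction modulo $p$, with the single uniform bound $p>\gamma_d\|f\|_\infty^{d}$; this is precisely what the paper does, and with that change your argument becomes its proof.
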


We then focus on the situation where $A=k[\underline t]$ with $\underline t = (t_1,\ldots,t_r)$ an $r$-tuple of indeterminates ($r\geq 1$), $k$ a field and $\sigma: k[\underline t] \rightarrow \overline k$ a specialization morphism sending each $t_i$ to a special value $t_i^\ast \in \overline k$, $i=1,\ldots,r$. In this situation, the Bertini-Noether conclusion is that if $f(\underline t, x)\in  k[\underline t,x]$ is indecomposable in $k(\underline t)[x]$ and of degree prime to the characteristic of $k$, then for all $\underline t^\ast =(t_1^\ast, \ldots, t_r^\ast) \in k^r$ but in a proper Zariski closed subset, the specialized polynomial $f(t_1^\ast,\ldots,t_r^\ast, x)$ is indecomposable in $k[x]$. 

The indecomposability assumption excludes polynomials $f$ of the form $u(\underline t,g(\underline t, x))$ with $u,g\in k[\underline t,x]$. 
It is natural to ask whether the Bertini-Noether conclusion extends to such polynomials 
and more generally to all polynomials that are indecomposable in $k[\underline t,x]$ ({as $(r+1)$-variable polynomials}). 
Although this is not true in general (take for example $f(t,x)=tx^4$),
we show nevertheless that the desired conclusion does hold up to some change of variables. Specifically we obtain the following result.

\begin{theorem} \label{th:main}
Let $f(\underline t, x)$ be indecomposable in $k[\underline t, x]$. Assume that $k$ is of characteristic $p=0$ or $p> \deg(f)$. Then we have the following:
\vskip 1mm

\noindent
{\rm (a)} if $\underline \alpha = (\alpha_1,\ldots, \alpha_r)$ is an $r$-tuple of indeterminates, the polynomial \hfill \break $f(t_1+\alpha_1x,\ldots,t_r+\alpha_rx, x)$ is indecomposable in $\overline {k(\underline \alpha, \underline t)}[x]$; \vskip 1mm

\noindent
{\rm (b)} for all $(\alpha_1^\ast,\ldots,\alpha_r^\ast )\in \overline k^{r}$ off a proper Zariski closed subset, the polynomial $f(t_1+\alpha_1^\ast x,\ldots,t_r+\alpha_r^\ast x, x)$ is indecomposable in $\overline {k(\underline t)}[x]$;  
\vskip 1mm

\noindent
{\rm (c)} for all $(\alpha_1^\ast,\ldots,\alpha_r^\ast,t_1^\ast,\ldots,t_r^\ast) \in \overline k^{2r}$ off a proper Zariski closed subset, the polynomial $f(t_1^\ast +\alpha_1^\ast x,\ldots,t_r^\ast +\alpha_r^\ast x, x)$ is indecomposable in $\overline k[x]$.
\vskip 1mm
\end{theorem}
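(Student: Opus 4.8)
The plan is to treat part (a) as the essential statement and to derive (b) and (c) from it by two successive Bertini--Noether specializations. The feature of the shear substitution $t_i \mapsto t_i + \alpha_i x$ that makes everything work is that it raises the $x$-degree of $f$ to its total degree: the coefficient of $x^{\deg f}$ in $f(t_1+\alpha_1 x,\ldots,t_r+\alpha_r x,x)$ is $f_D(\alpha_1,\ldots,\alpha_r,1)$, where $f_D$ is the leading form of $f$, and this is a nonzero element of $k[\underline\alpha]$. Hence $\deg_x f(t_1+\alpha_1 x,\ldots,t_r+\alpha_r x,x)=\deg f$, so the hypothesis $p=0$ or $p>\deg f$ guarantees that this degree stays prime to $\charac(k)$, and a univariate decomposition of the specialized polynomial genuinely involves all of $f$.

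Granting (a), I would obtain (b) and (c) as follows. For (b), regard $f(t_1+\alpha_1 x,\ldots,t_r+\alpha_r x,x)$ as an element of $A[x]$ with $A=\overline{k(\underline t)}[\underline\alpha]$ and quotient field $K=\overline{k(\underline t,\underline\alpha)}$; part (a) says it is indecomposable over $\overline K$, and Proposition \ref{prop:morphism} applied to the specialization $\sigma:\underline\alpha\mapsto\underline\alpha^\ast$ yields indecomposability in $\overline{k(\underline t)}[x]$ for all $\underline\alpha^\ast$ outside the proper closed set $\{I_f^\sigma=0\}$. Part (c) is the same argument run over $A=\overline k[\underline\alpha,\underline t]$, specializing the full tuple $(\underline\alpha,\underline t)\mapsto(\underline\alpha^\ast,\underline t^\ast)$; the degree being prime to the characteristic is exactly what lets Proposition \ref{prop:morphism} apply in each case.

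The heart of the matter is therefore (a), which I would prove by contraposition, using the characterization of indecomposability through the generic fibre that underlies \cite{Bo}: a polynomial in at least two variables whose degree is prime to $\charac(k)$ is indecomposable over $\overline k$ if and only if its generic fibre $f(\underline z)-T$ is absolutely irreducible over $\overline{k(T)}$. Set $L=\overline{k(\underline\alpha,\underline t)}$ and $F(x)=f(t_1+\alpha_1 x,\ldots,t_r+\alpha_r x,x)$, and suppose $F=U\circ G$ with $U,G\in L[x]$ of degrees $\geq 2$. Such a decomposition corresponds to a nontrivial system of blocks of imprimitivity for the monodromy group $\mathrm{Mon}(F)=\mathrm{Gal}\big(F(x)-y\,/\,L(y)\big)$ acting on the $\deg f$ roots of $F(x)=y$; concretely these roots are the intersection points of the generic line $\{(t_1+\alpha_1 x,\ldots,t_r+\alpha_r x,x)\}$ with the fibre $\{f=y\}$, and the blocks are the fibres of $G$. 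The goal is to show that, because $(\underline\alpha,\underline t)$ is generic, such a block system is forced to come from a global factorization $f=u\circ g$, contradicting the indecomposability of $f$.

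The main obstacle is precisely this propagation step: passing from a block system on one generic line to a decomposition of the ambient $f$. I would handle it by a Bertini-type comparison of monodromy groups, showing that restriction to a generic line neither enlarges the group of the family nor creates new blocks, so that the intermediate field $L(F)\subset L(G)\subset L(x)$ spreads out over the family of all lines into an intermediate field $\overline k(f)\subset\overline k(g)\subset\overline k(\underline z)$ with $g$ a genuine polynomial; absolute irreducibility of $f(\underline z)-T$ is what pins $g$ down and forbids the trivial (degree-one) outcome. The prime-to-characteristic hypothesis is used twice here: it makes the relevant covers tame and separable, so that the dictionary between decompositions and block systems is valid, and it guarantees that the decomposition data specialize well from the generic line. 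An alternative, more computational route to the same propagation exploits derivative identities: writing $y_i=t_i+\alpha_i x$, the partials $\partial F/\partial t_j=(\partial f/\partial y_j)(\underline y,x)$ are themselves restrictions of the partials of $f$ to the line, which constrains the fibres of $G$ and lets one reconstruct a multivariate $g$.
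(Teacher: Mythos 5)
Your derivation of (b) and (c) from (a) via Proposition \ref{prop:morphism} is exactly what the paper does, and your observation that the shear keeps $\deg_x$ equal to $\deg f$ (so the degree stays prime to the characteristic) is correct and relevant. The problem is part (a): what you call ``the main obstacle'' --- propagating a decomposition $F=U\circ G$ of the restriction to the generic line back to a decomposition $f=u\circ g$ of the ambient polynomial --- is not an obstacle to be handled in passing; it \emph{is} the theorem, and your proposal does not actually prove it. You assert that ``a Bertini-type comparison of monodromy groups'' shows the generic line section creates no new blocks, but no such off-the-shelf statement is available in the form you need. For $r\geq 1$ the extension $\overline k(f)\subset \overline k(\underline z)$ has transcendence degree $r$, so the intermediate field $L(F)\subset L(G)\subset L(x)$ on the line does not sit inside a finite Galois group attached to $f$ itself, and there is no group-theoretic dictionary into which ``spreading out $G$ into a polynomial $g$'' can be placed without further argument. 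The alternative route you mention (derivative identities constraining the fibres of $G$) is the right instinct, but as stated it is a one-sentence gesture at what the paper develops over several pages.

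For comparison, the paper first reduces from $r+1$ to $2$ variables by Matsusaka--Zariski (this part is close in spirit to your Bertini intuition and does work, because there indecomposability is equivalent to absolute irreducibility of $f-T$, for which generic-hyperplane-section theorems exist). The genuinely hard step, from two variables to one, is then done by a concrete computation: Burger's equation $\partial_\alpha\phi=\phi\,\partial_t\phi$ for the roots $\phi_i$ of $f(t+\alpha x,x)$ (Lemma \ref{lem:burger}), Newton's identities applied to the blocks $I_j$, an embedding of the roots into $\overline{k(\alpha)}[[t]]$ via Hensel, and the degree bounds of Lemma \ref{lem:deg} to force the outer polynomial $v$ to lie in $k(\alpha)[x]$; only then does specializing $\alpha$ produce a decomposition of $f(t,x)$. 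None of this machinery appears in your sketch, and the counterexample $f(t,x)=x^{p^2}+x^p+t$ of Remark \ref{rem:contre-exemple} shows the propagation step genuinely fails without the characteristic hypothesis, so any correct argument must use it in an essential, quantitative way --- not merely to ensure tameness and separability as you suggest.
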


Combined with the standard fact that $f(\underline t, x)$ is indecomposable in $\overline k[\underline t, x]$ if it is irreducible in $\overline k[\underline t,x]$, theorem \ref{th:main} has the 
following consequence which makes it easy to produce indecomposable polynomials in one variable.

\begin{corollary} \label{cor:intro} Let $f(\underline t, x)$ be irreducible in $\overline k[\underline t, x]$. Assume that $k$ is of characteristic $p=0$ or $p> \deg(f)$. Then for all $(\alpha_1^\ast,\ldots,\alpha_r^\ast,t_1^\ast,\ldots,t_r^\ast) \in \overline k^{2r}$ off a proper Zariski closed subset, the polynomial $f(t_1^\ast +\alpha_1^\ast x,\ldots,t_r^\ast +\alpha_r^\ast x, x)$ is indecomposable in $\overline k[x]$. 
\end{corollary}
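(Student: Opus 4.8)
The plan is to derive the Corollary directly from Theorem~\ref{th:main} together with the standard fact, explicitly recalled just before the Corollary, that irreducibility of $f(\underline t,x)$ in $\overline k[\underline t,x]$ implies indecomposability of $f(\underline t,x)$ in $\overline k[\underline t,x]$. The point is that the hypotheses of the Corollary are strictly stronger than the hypothesis of Theorem~\ref{th:main}, so the Corollary should be essentially an immediate consequence once the coefficient field is matched correctly.

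First I would invoke the quoted standard fact: since $f(\underline t,x)$ is irreducible in $\overline k[\underline t,x]$, it is indecomposable in $\overline k[\underline t,x]$. The only subtlety is that Theorem~\ref{th:main} is stated with the ground field $k$, whereas here indecomposability is given over $\overline k$. To align the two, I would apply Theorem~\ref{th:main} with $\overline k$ playing the role of $k$; this is legitimate because $\overline k$ is a field, it is algebraically closed (so $\overline{\overline k} = \overline k$, and the closures appearing in Theorem~\ref{th:main} cause no change), and the characteristic hypothesis $p=0$ or $p>\deg(f)$ is a property of the field that $\overline k$ inherits from $k$. Thus all the hypotheses of Theorem~\ref{th:main}, applied over $\overline k$, are satisfied.

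Next I would read off conclusion (c) of Theorem~\ref{th:main} in this setting: there is a proper Zariski closed subset of $\overline k^{2r}$ outside of which the polynomial $f(t_1^\ast+\alpha_1^\ast x,\ldots,t_r^\ast+\alpha_r^\ast x,x)$ is indecomposable in $\overline k[x]$. Since $\overline{\overline k}=\overline k$, the field $\overline{\overline k}[x]$ appearing in part (c) coincides with $\overline k[x]$, so this is exactly the assertion of the Corollary. Hence the statement follows at once.

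The only step requiring care, and the one I would present most carefully, is the base-field substitution $k \rightsquigarrow \overline k$ in the application of Theorem~\ref{th:main}: one must check that the hypothesis ``indecomposable in $k[\underline t,x]$'' of the theorem is genuinely met, which is supplied by the standard fact applied over $\overline k$, and that the characteristic condition transfers. Neither is a real obstacle, but making the field bookkeeping explicit is the substance of the argument; the rest is a direct appeal to part (c).
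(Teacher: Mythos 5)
Your proof is correct and follows exactly the route the paper intends: irreducibility over $\overline k$ gives indecomposability over $\overline k$ (the standard fact quoted just before the statement), and then Theorem~\ref{th:main}(c) applied with $\overline k$ as the base field yields the conclusion. The field bookkeeping you make explicit is harmless and sound, since $\overline{\overline k}=\overline k$ and the characteristic hypothesis is unchanged.
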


The assumption on the characteristic of $k$ in theorem \ref{th:main} guarantees that $f(\underline t, x)$ is 
indecomposable in $\overline k[\underline t, x]$ under the condition that it is indecomposable in $k[\underline t, x]$. 
This follows from \cite[theorem 4.2]{BDN}. A similar result holds for polynomials in one variable
\cite[lemma 21.8.11]{FrJa}. We will use these results in several occasions. We will further show that
this assumption on the characteristic of $k$ cannot be removed in theorem \ref{th:main} (see remark \ref{rem:contre-exemple}).

Theorem \ref{th:main} and corollary \ref{cor:intro} can be made more explicit: for two variables polynomials ($r=1$), the exceptional Zariski closed subset has a  degree bounded by  $\deg(f)^3+2\deg(f)$, see corollary \ref{thm:optimal_bound_new}.

A main step in theorem \ref{th:main} is to go from two to one variable (that is, the case $r=1$). A key ingredient is a partial differential equation satisfied by the roots of a polynomial equation (Burger's equation lemma \ref{lem:burger}) due to Wood \cite{Wo} and investigated further by Lecerf and Galligo \cite{GalLec}.

\medskip

\noindent
{\bf Acknowledgments.} 
The second author thanks G. Lecerf and A. Galligo for interesting discussions about Burger's equation.


\section{Preliminaries and first results}

\subsection{Decomposition of polynomials} 

A useful tool is the following decomposition result for polynomials in one variable,
established in \cite{Bo}.

Let $A$ be an integral domain, $f\in A[x]$ be a monic polynomial of degree $d$ and $m\geq 2$ be a divisor of $d$ that is invertible in $A$. Then there exists a unique triple $(u,g,h)$ of polynomials in $A[x]$ such that
\smallskip

\noindent
$(m{\rm -dec})$ \hskip 3cm $f(x) =u(g(x)) + h(x)$
\smallskip

\noindent
with the conditions that

\noindent
{\rm (i)} $u$ and $g$ are monic,

\noindent
{\rm (ii)} $\deg(u) = m$, the coefficient of $x^{m-1}$ in $u$ is $0$ and $\displaystyle \deg(h) < d - \frac{d}{m}$,

\noindent
{\rm (iii)} $h(x) = \sum_i h_i x^i$ with $(\deg(g) \hskip 2pt |\hskip 2pt  i \Rightarrow h_i = 0)$.
\smallskip

In particular, if $A$ is a field and  $1< m <d$, $f(x)$ is $m$-decomposable in $A[x]$ ({\it i.e.} decomposable with the polynomial $u$ from the definition of degree $m$) if and only if $h(x)=0$ 
in the above $m$-decomposition.

\begin{remark} Using this decomposition, one easily deduces the following statement which can be compared to theorem 2 of \cite{AN}:
\smallskip

\noindent
{\it Let $f(t_1,\ldots,t_r,x) \in k[t_1,\ldots,t_r][x]$ be a monic polynomial with $\deg(f) = d$ prime to the characteristic of $k$ and $m$ be a divisor of $d$ with $1<m<d$. If $f$ is $m$-decomposable in $k[t_1,\ldots,t_r][x]$ then for all $f^\prime(t_1,\ldots,t_r) \in k[t_1,\ldots,t_r]\setminus k$, $f + f^\prime$ is $m$-indecomposable  in $k[t_1,\ldots,t_r][x]$. }
\smallskip

\noindent
Indeed assume $f = u(g)$ with $u \in k[x]$ of degree $m$ and $g \in k[t_1,\ldots,t_r][x]$. Deduce that the $m$-decomposition (with $A=k[t_1,\ldots,t_r]$) of $f+f^\prime$ is $f+f^\prime = u^\prime(g)$ with 
$u^\prime(x) = u(x)+f^\prime$ (and no remainder). As $u^\prime\notin k[x]$, conclude with proposition 7 from \cite{Bo} that $f+f^\prime$ is not $m$-decomposable in $k[t_1,\ldots,t_r][x]$.
\end{remark} 

Next we recall from \cite{Bo} this more technical information on the decomposition ($m$-dec) that we will use later: the polynomial $g(x)$ is the  {\it approximate $m$-root of $f(x)$}. More specifically if $f(x) = x^d+a_1x^{d-1} +\cdots + a_d$ and $g(x) = x^{\frac{d}{m}}+b_1x^{{\frac{d}{m}}-1} +\cdots + b_{{\frac{d}{m}}}$,  we have

\begin{equation*}
\label{eq:sys}
\begin{cases}
  a_1 = m b_1 \\
  a_2 = m b_2+ \binom{m}{2} \hskip 2pt b_1^2 \\
  \vdots \\
  a_i = m b_i + {\hspace*{-1em} \displaystyle{\sum_{\substack{j_1+2j_2+\cdots+(i-1)j_{i-1}=i \\ j_1+j_2+\cdots + j_{i-1} \le m}}\hspace*{-1em} c_{j_1\ldots j_{i-1}} b_1^{j_1}\cdots b_{i-1}^{j_{i-1}}  }}, \qquad 1 \le i \le \frac{d}{m}
\end{cases}
\tag{$\mathcal{S}$}
\end{equation*}
where the coefficients $c_{j_1\ldots j_{i-1}}$ are the multinomial coefficients defined by the following formula:
$$c_{j_1\ldots j_{i-1}} = \binom{m}{j_1,\ldots,j_{i-1}} = \frac{m!}{j_1! \cdots j_{i-1}!(m-j_1-\cdots-j_{i-1})!}.$$

Once $g$ has been obtained we get the full decomposition as follows: first compute $f^{(1)} = f-g^m$ and set
$u^{(1)} =x^d$, $h^{(1)} = 0$.
If for the highest monomial $\alpha x^i$ of $f^{(1)}$, $i$ is divisible by $\frac dm$ then set
$f^{(2)} = f^{(1)} - \alpha g^{i\frac md}$, $u^{(2)} = u^{(1)} + \alpha x^{i\frac md}$ and $h^{(2)} = h^{(1)}$ ;
if $i$ is not divisible by $\frac dm$ then set $f^{(2)} = f^{(1)} - \alpha x^i$, $u^{(2)} = u^{(1)}$ and 
$h^{(2)} = h^{(1)} + \alpha x^i$. Then iterate the process with $f^{(1)}$ replaced by $f^{(2)}$.


\subsection{Further degree estimates for polynomials in two variables}
Let $A$ be an integral domain and $f(t,x) \in A[t,x]$ of degree $d$, monic in $x$:
$$f(t,x) = x^d + a_1(t)x^{d-1}+\cdots+a_d(t)$$
with $\deg a_i(t) \le i$, $1\le i \le d$.

Let $m|d$ and write the decomposition $f= u(g)+h$ associated to $m$, 
where $f$ is viewed as a one variable polynomial in $x$ over $A[t]$.
We have
$$g(t,x) = x^{\frac dm} + b_1(t) x^{\frac dm - 1} + \cdots + b_{\frac dm}(t) \in A[t,x],$$
$$h(t,x) = \sum h_i(t) x^i,$$
and
$$u(t,x) = x^m + u_2(t) x^{m-2} + \cdots + u_m(t) \in A[t,x].$$

\begin{lemma}
\label{lem:deg}
 \  Under the assumptions and notation above, we have
\begin{enumerate}
  \item $\deg_x g = \frac dm, \deg_x u = m, \deg_x h < d - \frac dm$; 
  \item $\deg_t g \le \frac dm, \deg_t u \le d, \deg_t h \le d$;
  \item $\deg g = \frac dm, \deg u \le d, \deg h \le d$.
\end{enumerate}
\end{lemma}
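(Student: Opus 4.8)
The plan is to read part (1) directly off the defining properties of the $m$-decomposition recalled above, and to reduce parts (2) and (3) to a single sharp estimate on the coefficients of $g$, namely $\deg_t b_i \le i$ for $1 \le i \le d/m$. Part (1) is immediate: $g$ is monic in $x$ of $x$-degree $d/m$, while $\deg_x u = m$ and $\deg_x h < d - d/m$ are exactly conditions (i)--(ii) of the decomposition. So from here I focus on the $t$-degrees and the total degrees.

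First I would prove the key estimate $\deg_t b_i \le i$ by induction on $i$, using the system $(\mathcal S)$ together with the hypothesis $\deg_t a_i \le i$. The $i$-th relation reads $m\, b_i = a_i - \sum c_{j_1\ldots j_{i-1}} b_1^{j_1}\cdots b_{i-1}^{j_{i-1}}$, the sum running over tuples with $j_1 + 2 j_2 + \cdots + (i-1) j_{i-1} = i$. By the induction hypothesis each monomial $b_1^{j_1}\cdots b_{i-1}^{j_{i-1}}$ has $t$-degree at most $\sum_\ell \ell\, j_\ell = i$, and $\deg_t a_i \le i$; since $m$ is invertible in $A$ this gives $\deg_t b_i = \deg_t(m\, b_i) \le i$. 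Two consequences follow at once. As $i \le d/m$, we get $\deg_t g = \max_i \deg_t b_i \le d/m$, the $g$-part of (2). Moreover the monomial $b_i(t)\, x^{d/m - i}$ has total degree at most $i + (d/m - i) = d/m$, while the leading term $x^{d/m}$ has degree $d/m$, so $\deg g = d/m$, the $g$-part of (3). (This mirrors the way $\deg_t a_i \le i$ forces $\deg f = d$.)

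For $u$ and $h$ I would argue with total degree, tracking the iterative construction recalled above. The first remainder is $f^{(1)} = f - g^m$; since $\deg f = d$ and $\deg g^m = m\cdot (d/m) = d$, we have $\deg f^{(1)} \le d$. I then claim, by induction along the iteration, that every remainder $f^{(j)}$ satisfies $\deg f^{(j)} \le d$. Writing $\alpha(t)\, x^i$ for its highest $x$-monomial, we have $\deg_t\alpha + i \le d$. In the step where $i$ is divisible by $d/m$ we subtract $\alpha\, g^{im/d}$, and here the sharp equality $\deg g = d/m$ gives $\deg(\alpha\, g^{im/d}) = \deg_t\alpha + (im/d)(d/m) = \deg_t\alpha + i \le d$, so $\deg f^{(j+1)} \le d$. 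In the other step we simply remove $\alpha x^i$, of total degree $\le d$, and append it to $h$; hence $\deg h \le d$ and in particular $\deg_t h \le d$. Finally the monomials appended to $u$ are of the form $\alpha(t)\, y^{im/d}$ (with $y$ the variable of $u$), and $\deg_t\alpha + im/d \le (d-i) + im/d \le d$, the last step because $m \le d$ forces $im/d \le i$; thus $\deg u \le d$ and $\deg_t u \le d$.

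The delicate point I expect is exactly this total-degree bookkeeping for $u$ and $h$: the naive $t$-degree estimate on $\alpha\, g^{im/d}$ only yields $\deg_t \le d + i$, which is too weak, so one must pass to total degree and invoke the \emph{sharp} equality $\deg g = d/m$ rather than the mere inequality $\deg_t g \le d/m$. This fixes the order of the argument: first establish $\deg_t b_i \le i$ (hence $\deg g = d/m$), and only then run the induction controlling $f^{(j)}$, and thereby $u$ and $h$.
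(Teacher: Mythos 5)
Your proof is correct and takes essentially the same route as the paper's: part (1) is read off the definition of the $m$-decomposition, the key bound $\deg_t b_i\le i$ is obtained by induction from system $(\mathcal{S})$, and the bounds on $u$ and $h$ are then propagated through the iterative construction by total-degree bookkeeping (your induction ``$\deg f^{(j)}\le d$'' is just an explicit packaging of the paper's observation that the coefficient of $x^{d-i}$ in each remainder has $t$-degree $\le i$).
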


\begin{proof}
The first item follows from the definition of the approximate $m$-root and the existence of such a decomposition.
We prove below a refinement of the second point. 

Fix some index $i$ with $1\leq i \leq d/m$. First we have $\deg b_i(t) \le i$: indeed from system (\ref{eq:sys})
we have $\deg b_1(t) = \deg a_1(t) \le 1$. Furthermore, 
$mb_i(t)$ is a $\Zz$-linear combination of $a_i(t)$ (which satisfies $\deg a_i(t) \le i$) and of terms $b_1^{j_1}\cdots b_{i-1}^{j_{i-1}}$
with $j_1+2j_2+\cdots+(i-1)j_{i-1}=i$. By induction we obtain $\deg b_i(t) \le i$.
This yields $\deg_t g \le \frac dm$ and $\deg_t g^j \le j\frac dm$.

If $\frac dm$ does not divide $i$, the coefficient $h_i(t)$ of $h(t,x) = \sum_{i=1}^d h_i(t)x^{d-i}$ 
is the coefficient of the highest monomial $\alpha_i(t)x^{d-i}$ in the difference between 
$f$ and powers of $g$. 

If $\frac dm$ divides $i$, let $j$ such that $i=j\frac dm$ and denote the former coefficient 
by $u_j(t)$ (it is the coefficient of $x^{m-j}$ in $u$). Then
$\deg u_j(t) = \deg \alpha_i(t) \le j \frac dm \le d$ ($j=2,\ldots, m$).
This implies that $\deg_t u_j(t)g^{m-j} \leq d$.

Conjoining the two cases, conclude that $\deg_t h \le d$.

This gives the second item and $\deg g= \frac dm$, $\deg h \le d$.
As $u$ is the sum of terms $u_j(t)x^{m-j}$ with $\deg u_j(t) \le j \frac dm$,
we have $\deg u \le \max_{j=2,\ldots,m}\big( j \frac dm + (m-j)\big) \le d$.
\end{proof}


\subsection{The Bertini-Noether conclusion}
If $\sigma: A\rightarrow B$ is a ring morphism, we denote  the image of elements $a\in A$ by $a^\sigma$. For $p(\underline x)\in A[\underline x]$, we denote the polynomial obtained by applying $\sigma$ to the coefficients of $p$ by  $p^\sigma(\underline x)$. 

\subsubsection{General statement} Fix an integral domain $A$ with quotient field $K$.

\begin{proposition} \label{prop:morphism} 
Let $f(x)\in A[x]$ be indecomposable in $K[x]$ of degree $d$ prime to the characteristic $p\geq 0$ of $K$. Then there exists a non-zero element $I_f\in A$ such that the following holds. For every morphism $\sigma:A\rightarrow k$ in a field $k$, if $I_f^\sigma\not=0$, then $f^\sigma(x)$ is indecomposable in $\overline k[x]$.
\end{proposition}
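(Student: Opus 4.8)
The plan is to reduce everything to the one-variable decomposition result $(m\text{-dec})$ recalled above, applied to each divisor $m$ of $d$ with $1<m<d$, and to package the finitely many obstructions to decomposability into a single element $I_f\in A$. First I would reduce to the monic case: write $f(x)=a_0x^d+\cdots$ with $a_0\in A$ the leading coefficient and set $\hat f = a_0^{-1}f\in K[x]$, which is monic. Since multiplying the outer polynomial $u$ by the constant $a_0$ does not change its degree, $f$ and $\hat f$ have the same decompositions up to this scaling; in particular $\hat f$ is indecomposable in $K[x]$, hence $m$-indecomposable for every divisor $m$ of $d$ with $1<m<d$.

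Next I would localize in order to make the decomposition available over a ring with fraction field $K$. Let $D$ be the product of all integers $m$ with $m\mid d$ and $1<m<d$ (each is invertible in $K$ since $\charac(K)$ is prime to $d$), and set $A'=A[(a_0D)^{-1}]$, an integral domain with fraction field $K$. Over $A'$ the polynomial $\hat f$ is monic and each such $m$ is invertible, so the decomposition result furnishes, for each $m$, the $m$-decomposition $\hat f=u_m(g_m)+h_m$ with $u_m,g_m,h_m\in A'[x]$; by uniqueness this coincides with the one computed over $K$. As $\hat f$ is $m$-indecomposable over $K$, we have $h_m\neq 0$. Every coefficient of $h_m$ lies in $A'=A[(a_0D)^{-1}]$, so some nonzero coefficient of $h_m$ can be written $N_m/(a_0D)^{e_m}$ with $N_m\in A\setminus\{0\}$. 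I then define $I_f = a_0\cdot D\cdot \prod_{m} N_m \in A\setminus\{0\}$ (an empty product, so $I_f=a_0$, when $d$ is prime).

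Finally I would transfer along $\sigma$. Suppose $\sigma\colon A\to k$ satisfies $I_f^\sigma\neq 0$. Then $a_0^\sigma\neq 0$, so $\deg f^\sigma=d$, and $D^\sigma\neq 0$, so each $m$ is invertible in the field $k$; hence $\sigma$ extends to a morphism $\sigma'\colon A'\to k$. Applying $\sigma'$ to $\hat f=u_m(g_m)+h_m$ yields an identity which I claim is precisely the $m$-decomposition of $\hat f^{\sigma'}$ over $k$: monicity and the degree of $u_m$ are preserved, the coefficient of $x^{m-1}$ in $u_m$ and the coefficients prescribed to vanish in condition (iii) stay zero (and $\deg g_m^{\sigma'}=d/m$ is unchanged), while $\deg h_m^{\sigma'}<d-d/m$ because reduction cannot raise degrees. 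By uniqueness of the $m$-decomposition this also gives the decomposition over $\overline k$, since all coefficients already lie in $k$. Now $N_m^\sigma\neq 0$ forces the corresponding coefficient of $h_m^{\sigma'}$ to be nonzero, so $h_m^{\sigma'}\neq 0$; by the characterization of $m$-decomposability over a field, $\hat f^{\sigma'}$, and therefore $f^\sigma = a_0^\sigma\,\hat f^{\sigma'}$, is $m$-indecomposable in $\overline k[x]$. As this holds for every admissible $m$, the polynomial $f^\sigma$ is indecomposable in $\overline k[x]$.

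I expect the main obstacle to be bookkeeping rather than conceptual. Because neither the leading coefficient $a_0$ nor the divisors $m$ need be invertible in $A$, one is forced to pass to the localization $A'$ and to extract genuine integral numerators $N_m$ so that $I_f$ lands in $A$; the delicate point is then to confirm, via the uniqueness clause of the decomposition result, that the reduced identity really is the $m$-decomposition over $k$, so that non-vanishing of the chosen numerator $N_m^\sigma$ indeed certifies $m$-indecomposability of the specialized polynomial.
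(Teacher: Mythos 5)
Your proposal is correct and follows essentially the same route as the paper: reduce to the monic polynomial $f/a_0$, localize $A$ so that $a_0$ and the relevant divisors of $d$ become invertible (the paper inverts $\gamma=d\,a_0$, you invert $a_0D$ — same effect), take the $m$-decomposition for each non-trivial divisor $m$, and build $I_f$ as the localizing element times one nonzero cleared-denominator coefficient from each remainder $h_m$. The only cosmetic difference is that the paper picks the top coefficient of $h_m^A$ while you pick an arbitrary nonzero numerator $N_m$; both certify $h_m^\sigma\neq 0$ via the uniqueness of the specialized $m$-decomposition exactly as you describe.
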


\begin{proof} Let $a_0$ be the coefficient of $x^d$ in $f(x)$, $\gamma =d \hskip 1pt a_0$ and $A_{\gamma^\infty}$ be the lo\-ca\-li\-zed ring of $A$ by the powers of $\gamma$. The polynomial $f(x)/a_0$ is in $A_{a_0^\infty}[x]$, is monic and is indecomposable in $K[x]$. For each non-trivial divisor $m$ of $d$, let 
$$f(x)/a_0 = u_m(g_m(x)) + h_m(x)$$ 
be the $m$-decomposition of $f(x)/a_0$ in $A_{\gamma^\infty}[x]$ ($m$ is invertible in $A_{\gamma^\infty}$). Each polynomial $h_m(x)$ writes $h_m(x) = h_m^A(x)/\gamma^{\nu_m}$ for some $h_m^A(x)\in A[x]$ and $\nu_m \in \Nn$, and is non-zero 
(as $f$ is indecomposable in $K[x]$). Let $h_{m0}$ be the (non-zero) coefficient of $h_m^A(x)$ of highest degree and set $I_f = \gamma \hskip 2pt \prod_m h_{m0}$. Consider next a morphism $\sigma:A\rightarrow k$ in a field $k$ such that 
$I_f^\sigma\not=0$. This morphism uniquely extends to some morphism $A_{\gamma^\infty}\rightarrow 
k$,  still denoted by $\sigma$. It is easily checked that the $m$-decomposition of 
$(f/a_0)^\sigma(x)$ in $k[x]$ is 
$$(f/a_0)^\sigma(x) = u_m^\sigma (g_m^\sigma(x)) + h_m^\sigma(x)$$
As $h_m^\sigma\not=0$ for all $m$, $(f/a_0)^\sigma$, and so also $f^\sigma$, is indecomposable in $k[x]$. \end{proof}

\subsubsection{Examples}

(a) For $A=\Zz$, then $I_f\in \Zz$, $I_f\not=0$.  
Proposition \ref{prop:morphism}, applied with $\sigma:\Zz\rightarrow \Ff_p$ the reduction 
morphism modulo a prime number $p$, yields the following:

\smallskip

\noindent
{\it for all suitably large $p$, the reduced polynomial $\overline f(x)$
modulo $p$ is indecomposable in $\overline{\Ff_p}[x]$.}
\smallskip

\noindent
This example will be refined in section \ref{sec:reduction}.

\medskip

\noindent
(b) Take $A=k[\underline t]$ with $k$ a field and $\underline t = (t_1,\ldots,t_r)$ some indeterminates. Denote in this situation by  $f(\underline t, x)$ the polynomial $f(x)$ of proposition \ref{prop:morphism}. Assume that $\deg(f)$ is prime to the characteristic of $k$ and that $f(\underline t, x)$ is indecomposable in $k(\underline t)[x]$. Proposition \ref{prop:morphism}, applied with $\sigma$ the specialization morphism $k[\underline t]\rightarrow \overline k$ that maps $\underline t = (t_1,\ldots,t_r)$ to an $r$-tuple $\underline t^\ast=
(t_1^\ast,\ldots,t_r^\ast)\in \overline k^r$ yields the following:
\smallskip

\noindent
{\it for all $\underline t^\ast$ off a proper Zariski closed subset of $\overline k^r$, the specialized polynomial  $f(\underline t^\ast, x)$ is indecomposable in $\overline k[x]$.}
\smallskip

\noindent
This example will be refined in section \ref{sec:proof_main_theorem}.
\medskip

\noindent
(c) Let $f(x)=x^d+a_1x^{d-1}+\cdots + a_d$ be the generic polynomial of degree $d\geq 1$ in one variable. Take for $A$ the ring $\Zz[\underline a]$ generated by the $d$-tuple of indeterminates $\underline a = (a_1,\ldots,a_d)$ corresponding to the coefficients of $f(x)$. 
The argument below shows that $f(x)$ is  indecom\-po\-sa\-ble in $\Qq(\underline a)[x]$.
Proposition \ref{prop:morphism}, applied next with $\sigma:A\rightarrow k$ a specialization morphism of $\underline a$ and $k$ any field of characteristic $0$, yields that all degree $d$ polynomials in $k[x]$ 
are indecomposable but possibly those from the proper Zariski closed subset corresponding to
the equation $I_f = 0$ (with $I_f$ viewed in $k[\underline a]$).

To show that $f(x)$ is indecom\-po\-sa\-ble in $\Qq(\underline a)[x]$, assume $f(x) = u(g(x))$ with 
$u,g\in \Qq(\underline a)[x]$ of degree $\geq 2$. As $f$ is monic, such a decomposition would 
exist with $u$ and $g$ monic in $\Qq[\underline a][x]$; this follows from lemma \ref{prop:prelim} below.
But then by specializing $\underline a$, it could be concluded 
that all degree $d$ polynomials in $\Qq[x]$ are decomposable. This is not the case, as for example corollary \ref{cor:intro} shows.


\section{Proof of theorem \ref{thm:reduction}} \label{sec:reduction} 

The proof is somewhat similar to the proof of lemma~\ref{lem:deg}.

Let $f \in \Zz[x]$ of degree $d$, $m$ be a divisor of $d$ and $p > d$ be a prime 
number. As in the proof of proposition \ref{prop:morphism}, we reduce to the case that $f$ is 
monic by dividing $f(x)$ by the leading coefficient $a_0$ and viewing the resulting 
polynomial in $\Zz_{a_0^\infty}[x]$. Then the reduction modulo $p$ of the $m$-decomposition 
$f=u(g)+h$ and the $m$-decomposition of the reduced polynomial modulo $p$ both exist 
and they coincide.

We say that a polynomial $p(x) = p_0x^d+ p_1x^{d-1} +\cdots + p_d$ of degree $\leq d$
is \emph{$f$-tame of order $d$} if there exist constants $\gamma_{i,d}$ such that 
$|p_i| \le \gamma_{i,d}\| \hskip 2pt f \|_\infty^i$ for all $i=0,\ldots,d$. This definition  
depends on $\|f\|_\infty$ and not on $\|p\|_\infty$.
Of course $f$ is itself $f$-tame of order $d$.

Using the system (\ref{eq:sys}), it follows by induction on $i$ that
$|b_i| \le \gamma_{i,d} \| f \|_\infty^i$, $i=1,\ldots, {\frac dm}$; thus $g$ is 
$f$-tame of order $\frac dm$.
Recall now how the decomposition is continued.
If $\frac dm$ does not divide $i$, the coefficient $h_i$ of $h = \sum_{i=1}^d h_ix^{d-i}$ 
is the coefficient of the highest monomial $\alpha_i x^{d-i}$ in the difference between $f$ and powers of $g$. 

If $\frac dm$ divides $i$, say $i=j\frac dm$, then pick the coefficient $\alpha_i$ of the highest 
monomial above and set $u_j = \alpha_i$: this is the coefficient of $x^{m-j}$ in $u(x)$. Deduce that
$|u_j| = |\alpha_i| \le \gamma_{j} \|f\|_\infty^{j \frac dm} \le \gamma \|f\|_\infty^d$ ($j=2,\ldots, m$)
for some constants $\gamma_{j}, \gamma$.

This implies that $u_j g^{m-j}$ is $f$-tame of order $d$ (even if it is a polynomial of degree $<d$).
Whence $h$ is $f$-tame of order $d$ and so $\| h \|_\infty \le \gamma_d \| f \|_\infty^d$.

Conclusion: as $f$ is not $m$-decomposable in $\Qq[x]$, $h(x) \neq 0$.
If $p > \gamma_d \| f\|_\infty^d$ then $h(x) \pmod p \neq 0$ and so
$f(x) \pmod p$ is not $m$-decomposable in $\overline{\Ff_p}[x]$.


\section{Proof of theorem \ref{th:main}} \label{sec:proof_main_theorem}

First note that assertions (b) and (c) immediately follow from 
assertion (a) and proposition \ref{prop:morphism}. We are left with proving assertion (a).
With no loss of generality we may assume that $\deg_{t_i}f\geq 0$. And this, due to the assumption 
on the characteristic of $k$, amounts to $\partial f/\partial t_i \not=0$, $i=1,\ldots,r$. Also recall that
due to the assumption on the characteristic of $k$, the polynomial $f(\underline t, x)$ is 
indecomposable in $\overline k[\underline t, x]$ \cite[theorem 4.2]{BDN}.

We divide the proof into two stages. 

\subsection{Stage 1: from $r$ to $2$ variables} \label{ssec:from_r_to_2}
Here we show that for $r\geq 2$, the polynomial $f(t_1+\alpha_1x,\ldots,t_{r-1}+\alpha_{r-1}x, t_r, x)$ is indecomposable in the polynomial ring $\overline{k(\alpha_1,\ldots,\alpha_{r-1}, t_1,\ldots,t_{r-1})}[t_r, x]$.

For this stage we use the following characterization: if $\underline y$ is a tuple of at least two indeterminates and $L$ an algebraically closed field, a polynomial $f(\underline y) \in L[\underline y]$ is indecomposable in $L[\underline y]$ if and only if $f(\underline y)-T$ is irreducible in $\overline {L(T)}[\underline y]$ (where $T$ is a new indeterminate). The desired conclusion readily follows by induction from the following result, which as explained in \cite[\S 2]{Na}, is a reformulation of the
Matsusaka-Zariski theorem \cite[proposition 10.5.2]{FrJa}.

\begin{proposition} 
\label{prop:matsusaka-zariski}
Let $s\geq 3$ be an integer, $\underline x = (x_1,\ldots,x_s)$ be an $s$-tuple of indeterminates and $Q(\underline x)\in k[\underline x]$ be an absolutely irreducible polynomial. Assume that $\partial Q/\partial x_1 \not=0$. Then if $\alpha_1$ is a new indeterminate, the polynomial $Q(x_1+\alpha_1x_s,x_2,\ldots,x_s)$ is
irreducible in $\overline{k(\alpha_1,x_1)}[x_2,\ldots,x_s]$.
\end{proposition}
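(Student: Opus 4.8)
The plan is to translate the substitution $x_1\mapsto x_1+\alpha_1 x_s$ into the language of generic hyperplane sections and then appeal to the Matsusaka--Zariski theorem quoted as \cite[proposition 10.5.2]{FrJa}. Both the absolute irreducibility of $Q$ and the sought irreducibility over $\overline{k(\alpha_1,x_1)}=\overline{\overline{k}(\alpha_1,x_1)}$ are unchanged if we enlarge $k$ to $\overline k$, so I would first assume $k$ algebraically closed and let $V\subseteq\Aa^{s}$ be the irreducible hypersurface defined by $Q=0$. For two scalars $a,c$, the map $(x_2,\ldots,x_s)\mapsto(a x_s+c,\,x_2,\ldots,x_s)$ is an isomorphism of $\Aa^{s-1}$ onto the hyperplane $H_{a,c}\colon X_1=a X_s+c$, carrying the zero set of $Q(a x_s+c,x_2,\ldots,x_s)$ onto the section $V\cap H_{a,c}$. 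Letting $(a,c)=(\alpha_1,x_1)$ be a pair of independent indeterminates then shows that $Q(x_1+\alpha_1 x_s,x_2,\ldots,x_s)$ is irreducible over $\overline{k(\alpha_1,x_1)}$ precisely when the generic member $V\cap H_{\alpha_1,x_1}$ of the two--parameter family $\{H_{a,c}\}$ is geometrically irreducible.

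I would then observe that this family is the system of divisors cut on $V$ by the non-vertical lines of $\Aa^2$, pulled back along the projection $\phi\colon V\to\Aa^2$, $(x_1,\ldots,x_s)\mapsto(x_1,x_s)$; its generic member is $\phi^{-1}(\ell)$ for a generic line $\ell$. This is exactly the situation governed by Bertini's irreducibility theorem in the Matsusaka--Zariski form: the generic member of such a system is irreducible as soon as the associated map has image of dimension $\geq 2$ and, in positive characteristic, is separable. Here $s\geq 3$ gives $\dim V=s-1\geq 2$; moreover, provided $Q$ involves a variable other than $x_1$ and $x_s$ (for which $s\geq 3$ leaves room), $x_1$ and $x_s$ are algebraically independent on $V$ and so $\dim\overline{\phi(V)}=2$; separability is supplied by the hypothesis $\partial Q/\partial x_1\neq 0$. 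Granting these checks, \cite[proposition 10.5.2]{FrJa} yields that $V\cap H_{\alpha_1,x_1}$ is geometrically irreducible, which is the claim. This is the reduction carried out in \cite[\S2]{Na}.

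The main obstacle, as I see it, is not invoking Matsusaka--Zariski but verifying its hypotheses along the dictionary above, i.e.\ ruling out the two ways in which the generic member of a linear system can degenerate: the presence of a fixed component, and the attached map contracting $V$ onto a curve. The first cannot occur because $V$ is a positive--dimensional variety and hence is not contained in the generic hyperplane $H_{\alpha_1,x_1}$; the second is what the condition $\dim\overline{\phi(V)}=2$ excludes, and this is the delicate point, resting on $Q$ genuinely depending on one of $x_2,\ldots,x_{s-1}$. The role of $\partial Q/\partial x_1\neq 0$ is to guarantee that the generic section is reduced --- equivalently that the relevant projection is separable --- which is indispensable when $\charac(k)=p>0$. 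Once these non-degeneracy conditions are matched with the dimension and derivative hypotheses of the statement, the conclusion follows at once.
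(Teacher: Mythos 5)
The paper offers no proof of this proposition --- it is quoted, following \cite[\S 2]{Na}, as a reformulation of the Matsusaka--Zariski theorem \cite[proposition 10.5.2]{FrJa} --- so your plan of reducing to that theorem via generic hyperplane sections is in the same spirit as the intended argument. The difficulty is that your reduction does not close, precisely at the point you yourself single out as delicate. The non-degeneracy condition $\dim\overline{\phi(V)}=2$, i.e.\ the algebraic independence of $x_1$ and $x_s$ on $V$, is equivalent to $Q$ involving at least one of $x_2,\dots,x_{s-1}$, and this does \emph{not} follow from the stated hypotheses: that ``$s\geq 3$ leaves room'' for such a variable is not the same as $Q$ actually depending on one. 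Indeed the statement is false without this extra assumption: for $s=3$ and $Q=x_1-x_3^2$, which is absolutely irreducible with $\partial Q/\partial x_1=1\neq 0$, the polynomial $Q(x_1+\alpha_1 x_3,x_2,x_3)=-x_3^2+\alpha_1x_3+x_1$ is a quadratic in $x_3$ over the algebraically closed field $\overline{k(\alpha_1,x_1)}$ and so splits into two linear factors. You therefore need the additional hypothesis $Q\notin k[x_1,x_s]$ (it is satisfied in the paper's application, where $Q=f(\underline t,x)-T$ visibly involves the extra variable $T$); as written, your ``granting these checks'' leaves the essential check both unperformed and, from the given hypotheses, impossible to perform.

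A second inaccuracy concerns positive characteristic: you assert that $\partial Q/\partial x_1\neq 0$ makes the projection $\phi\colon V\to\Aa^2$, $(x_1,\dots,x_s)\mapsto(x_1,x_s)$, separable. It does not. For $Q=x_1-x_2^p+x_3$ in characteristic $p$ one has $\partial Q/\partial x_1=1$, yet $k(V)=k(x_2,x_3)$ is purely inseparable of degree $p$ over $k(x_1,x_3)=k(x_2^p,x_3)$ --- while the conclusion of the proposition does hold for this $Q$. What $\partial Q/\partial x_1\neq 0$ actually provides is that $(x_2,\dots,x_s)$ is a \emph{separating} transcendence basis of $k(V)/k$; matching this with the exact separability hypothesis of \cite[proposition 10.5.2]{FrJa} is a piece of the dictionary that still has to be written out, and it is not the identification you propose.
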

\medskip

\subsection{Stage 2: from two to one variable} \label{ssec:from_2_to_1}
Here we show that for $r\geq 1$, $f(t_1+\alpha_1x,\ldots,t_{r}+\alpha_{r}x, x)$ is indecomposable in 
$\overline{k(\alpha_1,\ldots,\alpha_{r}, t_1,\ldots,t_{r})}[x]$.
From stage 1, we are reduced to proving the special case $r=1$ of theorem \ref{th:main} (a), which we restate below.

\begin{theorem} 
\label{th:main_r=1}
Let $f(t, x)$ be an indecomposable polynomial in $\overline k[t, x]$ with $k$ a field of characteristic $p=0$ or $p> \deg(f)$. Then the polynomial $f(t+\alpha x, x)$ is 
indecomposable in $\overline{k(\alpha,t)}[x]$. 
\end{theorem}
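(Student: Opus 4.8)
The plan is to argue by contradiction: assume $F(x):=f(t+\alpha x,x)$ decomposes over $\overline{k(\alpha,t)}$ and manufacture from such a decomposition a decomposition of $f$ in $\overline k[t,x]$, which is excluded by hypothesis. First I would reduce to the case where $f$ is monic in $x$ of degree $d=\deg f$: if the top homogeneous part $f_d$ has no $x^d$ term, replace $f(t,x)$ by $f(t+cx,x)$ for a constant $c\in\overline k$ with $f_d(c,1)\neq0$ (a $\overline k$-automorphism of $\overline k[t,x]$, hence harmless for indecomposability); since $f(t+cx,x)$ becomes $f(t+(\alpha+c)x,x)$ after $t\mapsto t+\alpha x$, with $\alpha+c$ again transcendental, the monic model yields the general case. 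After this reduction $F\in\overline k[\alpha,t][x]$ has polynomial coefficients, and its leading $x$-coefficient is $\lambda:=f_d(\alpha,1)\in\overline k[\alpha]$ with $\lambda(0)=1$.

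The engine is the derivation $\delta:=\partial_\alpha-x\,\partial_t$ acting coefficient-wise on $\overline k(\alpha,t)[x]$ (treating $x$ as a constant). It kills both $x$ and $w:=t+\alpha x$, so $\delta(G(t+\alpha x,x))=0$ for every $G\in\overline k[w,x]$; in particular $\delta(F)=0$, which is the polynomial incarnation of the Burgers equation of lemma~\ref{lem:burger} (equivalently, each root $\rho$ of $F$ satisfies $\partial_\alpha\rho=\rho\,\partial_t\rho$). Working with $\delta$ on coefficients has the merit of sidestepping any separability hypothesis on $F$. Conversely, in the coordinates $(\alpha,w,x)$ one has $\delta=\partial_\alpha$, so on polynomials of $\alpha$-degree $<p$ — which, since $p>d$, is all that occurs here — the kernel of $\delta$ is exactly $\overline k[w,x]$, i.e. the polynomials of the form $G(t+\alpha x,x)$.

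The heart of the proof propagates $\delta$ through a putative decomposition $F=u(g)$ with $\deg u=m\ge2$, $\deg g=e\ge2$, $me=d$. Taking $g$ monic (absorbing its leading coefficient into $u$) and using system~$(\mathcal S)$, the components lie in $\overline k(\alpha,t)[x]$, and applying $\delta$ gives $0=(\delta u)(g)+u'(g)\,\delta(g)$, where $u'\neq0$ because $m\le d<p$. Comparing the two sides through their base-$g$ expansions, and using that the leading coefficient $u_m=\lambda$ depends on $\alpha$ only, forces $\delta(g)=q\,g$ and $\delta(u_j)=-q\,j\,u_j$ for all $j$, with the single scalar $q=-\lambda'/(m\lambda)$. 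Rescaling $g\mapsto\widehat g:=\lambda^{1/m}g$ and adjusting $u$ correspondingly then makes $\delta(\widehat g)=0$ and renders all coefficients of the new outer polynomial $\widehat u$ $\delta$-closed; by the description of $\ker\delta$ this gives $\widehat g=G(t+\alpha x,x)$ with $G\in\overline k[w,x]$ and $\widehat u\in\overline k[y]$. Finally, injectivity of the shear substitution $\overline k[w,x]\hookrightarrow\overline k[\alpha,t,x]$ (equivalently, setting $\alpha=0$, legitimate because $\lambda(0)=1$) turns $F=\widehat u(\widehat g)$ into the identity $f(w,x)=\widehat u(G(w,x))$ in $\overline k[t,x]$, a decomposition of $f$ with $\deg\widehat u=m\ge2$: the desired contradiction.

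The main obstacle is this reconstruction: reconciling the leading-coefficient normalization with the requirement $\delta(\widehat g)=0$, and, above all, checking that $\widehat g=\lambda^{1/m}g$ is a genuine polynomial over $\overline k$ in $w$ and $x$ (rather than over some inseparable or radical extension), so that $G$ and $\widehat u$ really have coefficients in $\overline k$ and the passage back to $f$ is valid. This is where the hypothesis $\charac(k)=0$ or $>\deg f$ is used throughout — to invert $m$ in system~$(\mathcal S)$, to guarantee $u'\neq0$, and to keep $\ker\delta$ equal to $\overline k[w,x]$ on the bounded-degree polynomials at hand; I expect the polynomiality of $\widehat g$ to follow from $\delta(\widehat g)=0$ together with $\lambda(0)=1$ and the control of denominators (powers of $\lambda$) coming from system~$(\mathcal S)$.
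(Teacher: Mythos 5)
Your strategy---replacing the paper's root-level use of Burger's equation by the coefficient-level derivation $\delta=\partial_\alpha-x\,\partial_t$ and propagating it through a putative decomposition---is genuinely different from the paper's proof, which works with the roots $\phi_i$, Newton's identities, a Hensel embedding into $\overline{k(\alpha)}((t))$, and the Dujella--Gusi\'c lemma. The first half of your argument is sound: provided you use the canonical $m$-decomposition normalization in which the coefficient of $x^{m-1}$ in $u$ vanishes (you need this; without it the base-$g$ expansion only yields $\delta(g)=qg+\rho$ with $\deg_x\rho\le 1$, not $\rho=0$), comparing base-$g$ digits of $0=(\delta u)(g)+u'(g)\delta(g)$ does give $\lambda'+mq\lambda=0$, $\delta(g)=qg$ and $\delta(u_j)=-(m-j)qu_j$, and the rescaling by $\lambda^{1/m}$ makes everything $\delta$-closed. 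This part is clean and avoids the paper's reduction to simple roots.

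The reconstruction step, which you yourself flag as the main obstacle, is however a genuine gap, and it is exactly where the difficulty of the theorem sits. Your identification of $\ker\delta$ with $\overline k[w,x]$ is justified only for \emph{polynomials} in $\alpha,t,x$ of $\alpha$-degree $<p$; but $\widehat g$ and the coefficients $\widehat u_j$ are not such objects: the $g_i,u_j$ live a priori in $k(\alpha,t)$ with uncontrolled denominators, and $\lambda^{1/m}$ is an algebraic function of $\alpha$. On the fields where these elements actually live, the constants of $\delta$ are much larger in characteristic $p$: for instance $\delta(t^p)=0$ and $\delta(\alpha^p)=0$, while $t^p=w^p-\alpha^px^p$ is not algebraic over $k(w,x)$; so $\delta$-closedness alone cannot force $\widehat g$ into $\overline k[w,x]$. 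Ruling out such $p$-th-power contributions requires a quantitative input---this is precisely what the paper extracts from lemma \ref{lem:deg} (the bound $\deg_t v\le d<p$), after using lemma \ref{prop:prelim} to clear denominators, before concluding that an outer polynomial lying in $\overline{k(\alpha)}[[t^p]][x]$ must in fact lie in $k(\alpha)[x]$. Your sketch has no analogue of this step. Even in characteristic $0$ two points remain unproved: that a $\delta$-constant element of $\overline{k(\alpha,t)}$ lies in $\overline{k(w,x)}\cap\overline{k(\alpha,t)}=\overline k$ (true, but it needs an argument), and that $\widehat g$, known only to be $\delta$-constant and algebraic, is a genuine element of $\overline k[w,x]$ rather than of $\overline{k(w)}[x]$ or worse. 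Until these descent statements are established, the final identity $f=\widehat u(G)$ is not obtained.
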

Again because of the assumption on the characteristic of $k$, $f$ could equivalently be assumed to be indecomposable in $k[t,x]$.

\begin{remark} \label{rem:contre-exemple}
The following example, inspired by \cite[p.~21]{Sc}, shows the conclusion fails if the assumption on the characteristic $p$ is removed. Take $k=\Ff_p$ and $f(t,x)=x^{p^2}+x^p+t$. As $\deg_t(f)=1$, $f(t,x)$ is indecomposable in $\overline k[t,x]$. But the polynomial $f(t+\alpha x, x) = x^{p^2}+x^p+t+\alpha x$ is decomposable in $\overline{k(\alpha,t)}[x]$ (and even in $\overline{k(\alpha)}[x]$): indeed, if $a, b\in \overline{k(\alpha)}$ satisfy $a + b^p=1$ and $a b = \alpha$, then we have $x^{p^2}+x^p+t+\alpha x = (x^p+b x)^p + a (x^p+b x) + t $.
\end{remark}

\subsubsection{Preliminary lemmas}
The following three lemmas will be used in the proof of theorem \ref{th:main_r=1}. The first one is due 
to Lecerf and Galligo \cite{GalLec}. It expresses in a simple and algebraic way a result already obtained by J.A.~Wood \cite{Wo}. We denote partial derivatives $\frac{\partial }{\partial \alpha}$
and $\frac{\partial }{\partial t}$ by 
$\partial_{\alpha}$ and $\partial_t$.

\begin{lemma}[Burger's equation lemma]
\label{lem:burger}
Let $k$ be a field and $f \in k[t,x]$ be a polynomial of degree $d$.  
Let $q(\alpha,t,x) = f(t+\alpha x,x) \in k[\alpha,t,x]$. Suppose $\phi \in \overline{k(\alpha,t)}$ 
is a simple root in $x$ of the polynomial $q(\alpha,t,x)$, i.e.,
$q(\alpha,t,\phi) = 0$ and $\displaystyle \partial_xq(\alpha,t,\phi) \neq 0$.
Then the derivations $\partial_{\alpha}$ and $\partial_t$ of $k(\alpha,t)$ uniquely extend 
to  $k(\alpha,t,\phi)$ and we have $\partial_{\alpha} \phi = \phi \cdot \partial_t \phi$.
\end{lemma}

\begin{proof}
Condition $\displaystyle \partial_xq(\alpha,t,\phi) \neq 0$ guarantees that $\partial_{\alpha}$ and $\partial_t$ uniquely extend to $k(\alpha,t,\phi)$.
Differentiate then $q(\alpha,t,\phi)=0$ with respect to $\alpha$ and with respect to $t$. Using next the special form 
$q(\alpha,t,x) = f(t+\alpha x,x)$ of $q$, this leads to the following formulas:
$$\left\{\begin{matrix}
& \partial_x q (\alpha ,t,\phi) \hskip 2pt \partial_{\alpha} \phi = - \partial_{\alpha} q(\alpha,t,\phi) = - \phi \hskip 2pt \partial_t f (\alpha+t\phi,\phi) \hfill\\
& \partial_x q (\alpha,t,\phi) \hskip 2pt \partial_t \phi = - \partial_t q(\alpha,t,\phi) = - \partial_t f (\alpha+t\phi,\phi) \hfill \\
\end{matrix}
\right.$$ 

\noindent 
which yields what we want.
\end{proof}

\begin{lemma}
\label{lem:simple}
Let $K$ be a field and $g \in K[v]$ be a polynomial such that $d=\deg(g)$
is prime to the characteristic of $K$.  For all but at most $d-1$ values $c\in K$, 
the polynomial $g(v) + c$ has only simple roots in $\overline{K}$.
\end{lemma}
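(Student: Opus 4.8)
The plan is to prove Lemma \ref{lem:simple} by studying when $g(v)+c$ fails to have only simple roots, which happens exactly when $g(v)+c$ shares a root with its derivative $g'(v)$. First I would observe that a root $\phi$ of $g(v)+c$ is multiple precisely when $g'(\phi)=0$, since $\frac{d}{dv}(g(v)+c)=g'(v)$. So the ``bad'' values $c$ for which $g(v)+c$ has a repeated root are exactly those of the form $c=-g(\rho)$ where $\rho$ ranges over the roots of $g'$ in $\overline K$. This immediately reduces the problem to counting the distinct values taken by $-g$ on the zero set of $g'$.

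The key numerical input is the degree of $g'$. Here the hypothesis that $d=\deg(g)$ is prime to $\charac(K)$ is essential: it guarantees that the leading coefficient of $g$, multiplied by $d$, does not vanish, so $g'$ has degree exactly $d-1$ rather than dropping degree. Consequently $g'$ has at most $d-1$ roots in $\overline K$ (counted without multiplicity). Each such root $\rho$ contributes one bad value $c=-g(\rho)$, and distinct roots may well give the same value, so the number of bad values $c$ is at most $d-1$. For every other $c\in K$, the polynomial $g(v)+c$ has no common root with $g'$, hence has only simple roots in $\overline K$, which is the desired conclusion.

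I do not anticipate a serious obstacle here; the argument is elementary. The one point that requires care, and which I would state explicitly rather than gloss over, is the role of the characteristic assumption in pinning down $\deg(g')=d-1$. Without it, $g'$ could be identically zero (for instance if $g$ is a $p$-th power in characteristic $p$), in which case every root of $g(v)+c$ would be multiple and the conclusion would fail entirely. I would therefore spell out that $d$ prime to $\charac(K)$ forces $g'\neq 0$ and $\deg(g')=d-1$, and then the bound on the number of bad values follows directly. The only other subtlety is the convention on counting bad values: I would count them as distinct elements $c\in K$, noting that the bad values are among the finite set $\{-g(\rho): g'(\rho)=0\}$ of size at most $d-1$, so the complement is cofinite and all but at most $d-1$ values of $c$ work.
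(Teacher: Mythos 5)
Your proof is correct and follows essentially the same route as the paper: both identify the bad values of $c$ as $-g(\nu)$ for $\nu$ ranging over the roots of $g'$, and both use the hypothesis that $d$ is prime to $\charac(K)$ to guarantee $\deg(g')=d-1$ and hence at most $d-1$ such values. The paper merely packages this via the resultant formula $\mathrm{Res}(g+c,g^\prime)=d^{d}b_0^{2d-1}\prod_{\nu}(g(\nu)+c)$, whereas you argue directly with common roots of $g+c$ and $g'$; the content is identical.
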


\begin{proof}
Let $b_0\in K$ be the coefficient of $v^{d}$ in $g$. The discriminant of $g+c$ is 
$$\Delta = \mathrm{Res}(g+c,g^\prime) = d^{d} \hskip 1pt b_0^{2d-1} \hskip 2pt\prod_{\nu} (g(\nu)+c)$$
where in the product $\nu$ ranges over all roots $\nu\in\overline{K}$ of 
$g^\prime$ (with repetition for multiple roots). 
If $c$ is distinct from the $d-1$ values $-g(\nu)$ 
then $\Delta\not=0$ and
$g(v)+c$ have only simple roots.
\end{proof}

\begin{lemma}[Dujella-Gusic] 
\label{prop:prelim}
Let  $A$ be an integrally closed domain of quotient field $K$ and $f \in A[x]$, 
monic in $x$. If $f$ is decomposable in $K[x]$ then $f$ admits a decomposition in $A[x]$, i.e. 
$f = u(g)$ with $u,g \in A[x]$ monic in $x$.
\end{lemma}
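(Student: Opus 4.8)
The plan is to reduce to a normalized decomposition and then exploit the hypothesis that $A$ is integrally closed twice: once, via an integrality argument on the roots of $f$, to place $g$ in $A[x]$; and once, via division by the now-monic $g$, to place $u$ in $A[x]$. Suppose $f = u(g)$ with $u,g \in K[x]$ of degrees $m,n \ge 2$. Writing $g = \beta x^n + \cdots + g(0)$ with $\beta \in K$, $\beta \neq 0$, I first replace $g$ by $\tilde g = (g - g(0))/\beta$ and $u(y)$ by $\tilde u(y) = u(\beta y + g(0))$; then $\tilde u(\tilde g) = u(g) = f$, the polynomial $\tilde g$ is monic with $\tilde g(0) = 0$, and comparing leading coefficients in $f = \tilde u(\tilde g)$ (using that both $f$ and $\tilde g$ are monic) shows $\tilde u$ is monic as well. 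So with no loss I may assume $g$ is monic with $g(0)=0$ and $u$ is monic, both still in $K[x]$.

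Next I would show $g \in A[x]$. Factoring $u(y) = \prod_{j=1}^m (y-\mu_j)$ over $\overline{K}$ gives $f(x) = \prod_{j=1}^m \bigl(g(x)-\mu_j\bigr)$. Each factor $g(x)-\mu_j$ is monic of degree $n$ in $x$, and every root $\theta$ of $g(x)-\mu_j$ satisfies $f(\theta) = u(g(\theta)) = u(\mu_j) = 0$; hence $\theta$ is a root of the monic polynomial $f \in A[x]$ and is therefore integral over $A$. Consequently the coefficients of $g(x)-\mu_j$, being up to sign the elementary symmetric functions of these integral elements, are themselves integral over $A$. For $1 \le i \le n-1$ the coefficient of $x^i$ in $g(x)-\mu_j$ coincides with that of $g$ and lies in $K$; being both integral over $A$ and in $K$, it lies in $A$ because $A$ is integrally closed. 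Together with the leading coefficient $1$ and the vanishing constant term $g(0)=0$, this yields $g \in A[x]$.

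Finally I would recover $u \in A[x]$ by base-$g$ expansion. Since $g$ is now monic in $A[x]$, repeated division by $g$ (which stays inside $A[x]$ for a monic divisor over any commutative ring) gives $f = \sum_{j=0}^m c_j(x)\, g(x)^j$ with $c_j \in A[x]$ and $\deg c_j < n$. On the other hand $f = u(g) = \sum_{j=0}^m u_j\, g^j$, where the $u_j \in K$ are the coefficients of $u$ and have degree $0 < n$. By uniqueness of the base-$g$ expansion over $K$, the constants $u_j$ equal the $c_j$; in particular each $u_j = c_j \in A$, so $u \in A[x]$, and it is monic as noted. This exhibits the decomposition of $f$ in $A[x]$.

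The step I expect to be the main obstacle is the constant term of $g$: the integrality argument controls only the coefficients of the factors $g(x)-\mu_j$, and while the degrees $1,\ldots,n-1$ reproduce coefficients of $g$ lying in $K$, the degree-$0$ coefficient is entangled with the algebraic number $\mu_j$ and cannot be pulled into $A$ directly. The normalization $g(0)=0$ in the first step is precisely what sidesteps this difficulty, after which the argument is a clean combination of the integral-closedness of $A$ with division by a monic polynomial.
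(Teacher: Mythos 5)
Your proof is correct, and all three steps check out: the normalization $g\mapsto (g-g(0))/\beta$ preserves the composition and forces $\tilde u$ monic by comparing leading coefficients; the factorization $f=\prod_j\bigl(g-\mu_j\bigr)$ shows every root of $g-\mu_j$ is a root of the monic $f\in A[x]$, hence integral over $A$, so the coefficients of $x^i$ for $1\le i\le n-1$ (which are coefficients of $g$ and lie in $K$) fall into $A$ by integral closedness, while the normalization $g(0)=0$ disposes of the constant term exactly as you say; and the $g$-adic expansion, computed by division by the now-monic $g\in A[x]$ and compared with $f=\sum_j u_j g^j$ via uniqueness over $K$, puts the $u_j$ in $A$. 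For comparison: the paper does not prove this lemma at all --- it simply cites Dujella--Gusi\'c and Gusi\'c, adding only the remark that the characteristic hypothesis in those references is needed solely to descend from decomposability over an extension of $K$ to decomposability over $K$, which is assumed here. Your argument is therefore a genuine self-contained substitute, and it is visibly characteristic-free, which is consistent with that remark; had you instead tried to recover the constant term of $g$ without the normalization (e.g.\ by summing $g(0)-\mu_j$ over $j$ and dividing by $m$), you would have reintroduced a characteristic restriction, so your choice of normalization is the right one.
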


\begin{proof}
See \cite[theorem 2.1]{DG} or \cite[theorem 2.1]{Gu}. The assumption on the characteristic that is made 
there is used to first reduce from decomposability on an extension of $K$ to decomposability over $K$ itself. 
This is not needed here as we assume $f$ decomposable in $K[x]$.
\end{proof}

\subsubsection{Proof of theorem \ref{th:main_r=1}}

We assume that $f(t+\alpha x,x) \in k[\alpha,t,x]$ is decomposable in $\overline{k(\alpha,t)}[x]$, and equivalently in $k(\alpha,t)[x]$, and we will prove that $f(t,x)$ is decomposable in $\overline k[t,x]$.

Adding a constant $c\in k$ to $f(x,y)$ changes $f(t+\alpha x,x)$ to $f(t+\alpha x,x) + c$ and does not affect the 
decomposability assumption nor the desired conclusion. Note next that $\deg_x(f(\alpha x,x)) = d$. As $p=0$ or $p>d$, it follows from lemma~\ref{lem:simple} that some element $c\in k$ can be found such that the polynomial $f(\alpha x,x)+c$ has only simple roots in $\overline{k(\alpha)}$. Up to replacing $f$ by $f+c$ we may and will assume 
that this is the case for $f(\alpha x,x)$ itself.

If $f_d(t,x)\in k[t,x]$ denotes the homogeneous part of degree $d$ in $f(t,x)$, the leading coefficient of $f(\alpha x,x)$, relative to $x$, is $f_d(\alpha ,1)$.  Consider now the polynomial $\tilde{q}(\alpha,t,x)=f(t+\alpha x,x)/f_d(\alpha ,1)=q(\alpha ,t,x)/f_d(\alpha ,1)$.
By construction $\tilde{q} \in k(\alpha)[t][x]$, is monic in $x$ and is decomposable in $k(\alpha,t)[x]$. 
By lemma \ref{prop:prelim} applied with $A = k(\alpha)[t]$,
we get $\tilde{q}=u(g)$ with $u, g \in k(\alpha)[t][x]$, monic in $x$ and such that 
$\deg_x u=m \geq 2$ and $\deg_x g=d/m \geq 2$. Set

$$\tilde{q}(\alpha,t,x) = \prod_{i=1}^d (x-\phi_i) = \prod_{j=1}^{m} (g(\alpha,t,x)-\lambda_j),$$

\noindent
so that $\phi_1,\ldots,\phi_d \in \overline{k(\alpha,t)}$ are the roots of $\tilde{q}$,
and $\lambda_1,\dots,\lambda_m \in \overline{k(\alpha,t)}$ are the roots of $u$.
Furthermore, by uniqueness of factorization, there exists a partition of $\{1,\ldots,d\}$
into subsets $I_1,\ldots,I_m$ of $\{1,\ldots,d\}$ such that:

$$\prod_{i \in I_j} (x-\phi_i) = g(\alpha,t,x) - \lambda_j \  \ (j=1,\ldots,m)$$

We will use Newton's identities: for a polynomial $p(x) = x^n+p_1x^{n-1}+\cdots +p_{n-1}x+p_n = \prod_{i=1}^n (x-\phi_i)$, setting $S_\ell = \sum_{i=1}^n \phi_i^\ell$, we have:
\begin{equation}
\label{eq:newton}
\tag{N} 
 S_\ell +p_1 S_{\ell-1}+\cdots+p_{\ell-1} S_1+\ell p_{\ell} =0 \quad (\ell = 1,\ldots, n)
\end{equation}

\noindent
Applied to $g(\alpha,t,x)-\lambda_j$ (for which only the constant term depends on $j$), this provides the following: for every $\ell=1,\ldots, \frac d m -1$ and $j=1,\ldots,m$,
\begin{equation}
\sum_{i\in I_1} \phi_i^\ell = \sum_ {i\in I_j} \phi_i^\ell  \in k(\alpha)[t].   
\label{eq:phil}
\tag{*}
\end{equation}

At this stage we use our initial reduction to the situation that $f(\alpha x,x)$ has only simple roots in $\overline{k(\alpha)}$. This implies first that $\tilde{q}(\alpha ,t,x)$ has only simple roots in $\overline{k(\alpha,t)}$, and, second, that these roots, $\phi_1,\ldots,\phi_d$, can be viewed in the ring $\overline{k(\alpha)}[[t]]$ of formal power series in $t$ with coefficients in $\overline{k(\alpha)}$, {\it via} some embedding $k(\alpha,t)(\phi_1,\ldots,\phi_d)\subset \overline{k(\alpha)}((t))$; such an embedding indeed exists thanks to Hensel's lemma.

Differentiation of (\ref{eq:phil}) for $\ell = \frac d m -1$ with respect to $\alpha$ then provides

$$\sum_{i\in I_1} (\frac dm -1) \cdot \partial_{\alpha} \phi_i \cdot  \phi_i^{\frac d m -2} = \sum_{i\in I_j} (\frac dm -1) \cdot \partial_{\alpha} \phi_i \cdot \phi_i^{\frac d m -2}  \in k(\alpha)[t].$$   

\noindent
Use lemma \ref{lem:burger} to deduce that

$$\sum_{i\in I_1}  \phi_i \cdot \partial_t \phi_i \cdot  \phi_i^{\frac d m -2} = \sum_{i\in I_j} \phi_i \cdot \partial_t \phi_i \cdot \phi_i^{\frac d m -2}  \in 
k(a)[t]$$    

\noindent 
and to conclude that
\begin{equation}
\partial_t \left( \sum_{i\in I_1}     \phi_i^{\frac d m}\right)  = \partial_t \left(\sum_{i\in I_j}  \phi_i^{\frac d m} \right) \in k(\alpha)[t]  \hskip 10mm (j=1,\ldots,m)  
\label{eq:phil2}
\tag{**}
\end{equation}

\noindent
Use this conclusion for $j=1$ to write $\sum_{i\in I_1} \phi_i^{\frac d m} = P_1 + d_1$ for some $P_1 \in k(\alpha)[t]$ with $P_1(\alpha,0)=0$ and some $d_1 \in \overline{k(\alpha)}[[t^p]]$, and to deduce next that $\sum_{i\in I_j} \phi_i^{\frac d m}=P_1+d_j$ for some $d_j \in \overline{k(\alpha)}[[t^p]]$, $j=1,\ldots, m$.

\begin{remark} If the characteristic is $p=0$, then the elements $d_1,\ldots,d_m$ are 
constants in $\overline{k(\alpha)}$, and the end of the proof below is simpler.
\end{remark}

The Newton identity (\ref{eq:newton}) with $\ell=d/m$ and $p(x)=g(\alpha,t,x)-\lambda_j$ gives

$$\prod_{i\in I_j} (-\phi_i) = g_{d/m} = -\frac m d (S_{d/m} + g_1 S_{d/m-1} + \cdots +g_{d/m-1}S_1).$$
\noindent
where $g_1,\ldots,g_{d/m}\in k[\alpha,t]$ are the coefficients of $g$ with respect to $x$. From display (\ref{eq:phil}), the sums $S_1,\ldots,S_{\frac dm - 1}$ lie in    
 $k(\alpha)[t]$ 
and are independent of $j=1,\ldots,m$. And from above we have $S_{\frac dm} =  P_1 + d_j$. Therefore there exists a polynomial $P_0 \in k(\alpha)[t]$
(independent of $j$) and elements 
$e_1,\ldots, e_m \in \overline{k(\alpha)}[[t^p]]$ such that $\prod_{I_j} (-\phi_i) = P_0 + e_j$ ($j=1,\ldots,m$).
This provides this formula for $\lambda_j$ ($j=1,\ldots,m$):

$$\lambda_j = g(\alpha,t,0) - \prod_{i\in I_j} (-\phi_i) = g(\alpha,t,0) - P_0 - e_j$$

Set $G(\alpha,t,x) = g(\alpha,t,x)-g(\alpha,t,0)+ P_0 \in k(\alpha)[t,x]$ so that 
$$\tilde{q}(\alpha,t,x)= \prod_{j=1}^{m} (g(\alpha,t,x)-\lambda_j) = \prod_{j=1}^{m} (G(\alpha,t,x)+e_j).$$
This provides the decomposition $\tilde{q} = v(G)$ with $v(x) = \prod_{j=1}^{m} (x+e_j)$ in $\overline{k(\alpha)}[[t^p]][x]$ and $G \in k(\alpha)[t,x]$.

As $\tilde{q}$ and $G$ lie in $k(\alpha)[t,x]$ we deduce that $v \in k(\alpha,t)[x]$: indeed, once we know $\tilde{q}$ and $G$, the computation of $v$ is reduced to the resolution of a linear system. But then by lemma \ref{prop:prelim} one may take $v \in k(\alpha)[t,x]$.
Up to a linear change of variables $x \mapsto x - a$, one may also assume that $v(x)$ is of the form $v(x) = x^m + v_{2}x^{m-2}+\cdots$ ({\it i.e.} $v_1=0$), so that we can apply lemma \ref{lem:deg}.
Conclude that $\deg_t v \leq d$. As $v\in \overline{k(\alpha)}[[t^p]][x]$ and $p>d$ we deduce that $v \in k(\alpha)[x]$. This shows that $\tilde{q}$ is decomposable in $k(\alpha)[t,x]$.

Multiply the equality $\tilde{q}=v(G)$ by $f_d(\alpha,1)$ to get that $q$ is decomposable in $k(\alpha)[t,x]$, that is: $q(\alpha,t,x)= f(t+\alpha x,x)= v^\prime(\alpha, (G^\prime(\alpha,t,x))$ with $v^\prime \in k(\alpha)[x]$ of degree $\geq 2$ and $G^\prime(\alpha,t,x) \in k(\alpha)[t,x]$. For all but finitely many $\alpha^\ast\in \overline k$, specialization of $\alpha$ to $\alpha^\ast$ of this decomposition provides the non-trivial decomposition $f(t+\alpha^\ast x,x)= v^\prime(\alpha^\ast, (G^\prime(\alpha^\ast,t,x))$. But then the change of variables 
$(t,x) \mapsto (t-\alpha^*x,x)$ shows that $f(t,x)$ is decomposable in $\overline k[t,x]$.

\subsection{Explicit versions}
We explain here how our method can be used to get explicit results. For simplicity, we restrict to polynomials in two variables.

\begin{corollary} \label{thm:optimal_bound_new}
Let $f(t, x)$ be an indecomposable polynomial in $k[t, x]$ with degree $d$
where $k$ is a field of characteristic $p=0$ or $p> d$. 
Then there exist polynomials $h_{m,i}(\alpha,t) \in k[\alpha,t]$ where $m|d$, and $i=1,\ldots,d-d/m$ of total degree $\leq md^2+2d$ with the following property: for all $(t^{\ast}, \alpha^{\ast}) \in k^2$, 
if for each divisor $m$ of $d$ there exists $i_0$ such that  $h_{m,i_0}(\alpha^{\ast},t^{\ast}) \neq 0$, 
then $f(t^{\ast}+\alpha^{\ast}x,x)$ is indecomposable of degree $d$ in $k[x]$.
\end{corollary}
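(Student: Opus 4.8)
The plan is to make explicit, for the case $r=1$, the exceptional locus that proposition~\ref{prop:morphism} produces when it is applied to $q(\alpha,t,x)=f(t+\alpha x,x)$, viewed as a one-variable polynomial in $x$ over the ring $A=k[\alpha,t]$. By theorem~\ref{th:main_r=1}, already established, $q$ is indecomposable in $\overline{k(\alpha,t)}[x]$, hence in $k(\alpha,t)[x]$; and $\deg q=d$ is prime to $p$. Writing $a_0=f_d(\alpha,1)\in k[\alpha]$ for the leading coefficient of $q$ in $x$ (of degree $\le d$), I would first divide by $a_0$ to obtain the monic polynomial $\tilde q=q/a_0$, again indecomposable in $k(\alpha,t)[x]$. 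Since $p=0$ or $p>d$, every nontrivial divisor $m$ of $d$ (i.e. $1<m<d$) is invertible, so for each such $m$ the decomposition $\tilde q=u_m(g_m)+h_m$ of \cite{Bo} exists over the localized ring $A_{a_0^\infty}=k[\alpha,t]_{a_0^\infty}$, and indecomposability forces $h_m\neq 0$ for every such $m$.

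The core of the argument is to clear denominators and track total degrees in $(\alpha,t)$, in the spirit of lemma~\ref{lem:deg} and of the proof of theorem~\ref{thm:reduction}, with the role of $\|\cdot\|_\infty^i$ now played by ``degree $\le i\deg a_0$''. Each coefficient of $q$ in $x$ has $(\alpha,t)$-degree $\le d$, and $\deg a_0\le d$. Feeding this into system~(\ref{eq:sys}) and inducting on $i$, I would show the coefficient $b_i$ of $g_m$ has the form $B_i/(c_i a_0^{\,i})$ with $c_i\in k^\times$ and $\deg_{(\alpha,t)}B_i\le i\,d$; continuing the decomposition as recalled just after system~(\ref{eq:sys}) (subtracting successive powers of $g_m$), the coefficient of $x^{d-k}$ in $h_m$ likewise has denominator dividing $a_0^{\,k}$ and numerator of degree $\le k\,d$. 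I would then put all coefficients of $h_m$ over one common denominator $a_0^{\,\nu_m}$ with $\nu_m$ chosen strictly larger than every occurring value of $k$, and define the $h_{m,i}\in k[\alpha,t]$ to be the resulting numerators. This uniform clearing has two effects: collecting the contributions of the numerator and of the powers of $a_0$ yields the total-degree bound $\deg_{(\alpha,t)}h_{m,i}\le md^2+2d$, and, crucially, it forces $a_0$ to divide every $h_{m,i}$ (each numerator retains a positive power of $a_0$).

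Finally I would run the specialization argument of proposition~\ref{prop:morphism}. Fix $(t^\ast,\alpha^\ast)\in k^2$ such that for every nontrivial divisor $m$ of $d$ some $h_{m,i_0}(\alpha^\ast,t^\ast)\neq 0$. Because $a_0\mid h_{m,i}$, this already forces $a_0(\alpha^\ast)=f_d(\alpha^\ast,1)\neq 0$, so $f(t^\ast+\alpha^\ast x,x)$ has degree exactly $d$ and the evaluation morphism $\sigma\colon k[\alpha,t]\to k$ at $(\alpha^\ast,t^\ast)$ extends to $A_{a_0^\infty}$. As in the proofs of proposition~\ref{prop:morphism} and theorem~\ref{thm:reduction}, reduction commutes with the $m$-decomposition, so the $m$-decomposition of $\tilde q^\sigma$ is $u_m^\sigma(g_m^\sigma)+h_m^\sigma$, and $h_m^\sigma\neq 0$ precisely because some $h_{m,i_0}(\alpha^\ast,t^\ast)\neq 0$. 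Hence $f(t^\ast+\alpha^\ast x,x)$ is $m$-indecomposable for every nontrivial $m\mid d$, i.e. indecomposable of degree $d$ in $k[x]$. The main obstacle is the degree bookkeeping of the middle paragraph: one must simultaneously control the power of $a_0$ entering the denominators and the degree of the numerators through both system~(\ref{eq:sys}) and the iterative continuation of the decomposition, and then choose the common denominator so that $a_0$ divides every $h_{m,i}$ — this last point being exactly what secures the ``degree $d$'' clause of the statement.
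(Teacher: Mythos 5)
Your proposal is correct and follows essentially the same route as the paper: reduce to the monic polynomial $\tilde q=q/f_d(\alpha,1)$ over the localization at the leading coefficient, track through system~(\ref{eq:sys}) and the iterative continuation of the $m$-decomposition that the coefficient of $x^{d-k}$ in $h_m$ has denominator $a_0^k$ and numerator of controlled degree, clear denominators with one extra power of $a_0$ so that $a_0\mid h_{m,i}$ (which secures the ``degree $d$'' clause), and specialize. The only cosmetic difference is that you run a single induction on total degree in $(\alpha,t)$, where the paper bounds $\deg_t$ via lemma~\ref{lem:deg} and $\deg_\alpha$ via its ``$\alpha$-tame'' bookkeeping; both yield the stated bound $md^2+2d$.
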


\begin{proof}
The proof is a variation of that of lemma \ref{lem:deg} or of theorem \ref{thm:reduction}.
Set 
$$q(\alpha,t,x) = f(t+\alpha x,x) = a^\prime_0(\alpha,t)x^d+a^\prime_1(\alpha,t)x^{d-1}+\cdots+ a^\prime_d(\alpha,t)$$
Due to the assumption  $\deg f =d$ we get:
$$\deg_t a^\prime_i(\alpha,t) \le i, \quad \deg_\alpha a^\prime_i(\alpha,t) \le d-i, \quad \deg a^\prime_i(\alpha,t) \le d; \quad i=0,\ldots,d.$$
In particular $a^\prime_0(\alpha,t) = a^\prime_0(\alpha)$ does not depend on $t$.

Consider then $\tilde{q}(\alpha,t,x) = q(\alpha,t,x)/a^\prime_0(\alpha)$; this is a polynomial in $k(\alpha)[t,x]$, monic in $x$.
Consider the $m$-decomposition of $\tilde{q}$ with respect to the variable $x$:
$\tilde{q} = u_m(g_m)+h_m$.

\smallskip

\emph{Variable $t$.} Apply lemma \ref{lem:deg} to the polynomial $\tilde{q}$ seen as a polynomial
in $A[t,x]$ with $A = k(\alpha)$, since $\deg_t a^\prime_i(\alpha,t) \le i$. This yields  
$\deg_t h \le d$.

\smallskip

\emph{Variable $\alpha$.} Consider now $\tilde{q}$ as a rational fraction in $\alpha$ and as a polynomial in $x$
 to compute the degree in $\alpha$ of $h$ (we forget the variable $t$). 
Lemma \ref{lem:deg} cannot be applied since the degree of the coefficients does not
satisfy the correct hypothesis (moreover the coefficients are not polynomials in $\alpha$).
The $m$-decomposition $\tilde{q}= u_m(g_m)+h_m$ lives in $A[\alpha]_{(a^\prime_0(\alpha))^\infty}[x]$ with $A=k[t]$.

A polynomial $g(\alpha,x)=c_0x^\delta+c_1(\alpha)x^{\delta-1}+\cdots+c_\delta(\alpha)$ in $A[\alpha]_{(a^\prime_0(\alpha))^\infty}[x]$ is
\emph{$\alpha$-tame of order $\delta$} if each $c_i(\alpha)$ can be written ($i=0,\ldots,\delta$):
$$c_i(\alpha) = \frac{c^\prime_i(\alpha)}{a^\prime_0(\alpha)^i} \text{ with } \deg c^\prime_i(\alpha) \le i\delta.$$

Note that $a^\prime_0(\alpha)$ comes from $\tilde{q}$ and is fixed.

The following properties can easily be proved:
\begin{enumerate}
  \item $\tilde{q}(\alpha,t,x)$ is $\alpha$-tame of order $d$. 
  \item The sum of two $\alpha$-tame polynomials of order $\delta$ is $\alpha$-tame of order $\delta$. 
  \item The product of a $\alpha$-tame polynomial of order $\delta$ and a $\alpha$-tame polynomial of order $\delta^\prime$
is $\alpha$-tame polynomial of order $\delta+\delta^\prime$. 
  \item \label{item:4} The $k$-power a $\alpha$-tame polynomial of order $\delta$ is $\alpha$-tame of order $k\delta$. 
  \item \label{item:5} An $\alpha$-tame polynomial of order $jd$ is an $\alpha$-tame polynomial of order $md$ ($j=1,\ldots,m$). 
\end{enumerate}

By inspection of system $(\mathcal{S})$, we have in the decomposition
$\tilde{q}= u_m(g_m)+h_m$, that $g_m$ is $\alpha$-tame of order $d$.
The proof is very similar to the one in lemma \ref{lem:deg}.
Then by item \ref{item:4}, $g_m^j$ are $\alpha$-tame of order $jd$, and by item
\ref{item:5}, $\tilde q$ and $g_m^j$ are  $\alpha$-tame of order $md$, ($j=1,\ldots,m$).
As in lemma \ref{lem:deg} we distinguish two cases:

If $\frac dm$ does not divide $i$, the coefficient $h_{m,i}(\alpha)$ of $h_m(x) = \sum_{i=1}^d h_{m,i}(\alpha)x^{d-i}$ 
is the coefficient of the highest monomial $\gamma_i(\alpha)x^{d-i}$ in the difference between 
$\tilde{q}$ and powers of $g_m$. 

If $\frac dm$ divides $i$, let $j$ such that $i=j\frac dm$ and denote the former coefficient 
by $u_{m,j}(\alpha)$ (it is the coefficient of $x^{m-j}$ in $u_m$). 
Then $u_{m,j}(\alpha) = \gamma_i(\alpha) = \frac{\gamma^\prime_i(\alpha)}{a^\prime_0(\alpha)^i}$.
This implies that $u_{m,j}(\alpha)g_m^{m-j}$ is $\alpha$-tame of order $md$.

Both cases imply that $u_m(g_m)$ and $h_m$ are $\alpha$-tame of order $md$.

\medskip

\noindent
\emph{Conclusion.}
The $m$-decomposition $\tilde{q}= u_m(g_m)+h_m$ provides a decomposition 
$q(\alpha,t,x)= a^\prime_0(\alpha) \times \frac{1}{a^\prime_0(\alpha)^d} (u^\prime_m(g^\prime_m)+h^\prime_m)$
with $h^\prime_m = \sum h^\prime_{m,i}(\alpha,t)x^{d-\frac{d}{m}-i}$ a 
\emph{polynomial} in $k[\alpha,t,x]$ whose coefficients 
satisfy: $\deg_\alpha h^\prime_{m,i} \le md^2$ and $\deg_t h^\prime_{m,i} \le d$.
Hence $\deg h^\prime_{m,i} \le md^2+d$. Finally, if $a^\prime_0(\alpha)\neq 0$ then as usual $q$ is $m$-decomposable if and only if $h^\prime_{m,i}=0$ for all $i$.\\
We set $h_{m,i}(\alpha,t)=a^\prime_0(\alpha) \hskip 2pt h^\prime_{m,i}(\alpha,t)$ and we have the desired result.
\end{proof}

\begin{corollary}
Let $f(t, x)$ be an indecomposable polynomial in $k[t, x]$ with degree $d$
where $k$ is a field of characteristic $p=0$ or $p> d$. 
Let $S$ be a finite subset of $k$. For a uniform random choice of $\alpha$, $t$ in $S$, the probability
$$\mathcal{P}\big( \{ f(t^{\ast}+\alpha^{\ast}x,x) \textrm{ is indecomposable in } k[x] \, |\,  \alpha, t \in S\} \big)$$
is at least equal to $1 -\mathcal{D}/|S|$, with $\mathcal{D} =\sigma_1(d).d^2+2\sigma_0(d).d$ where $\sigma_1(d)=\sum_{m|d}m$, $\sigma_0(d)$ is the number of divisors of $d$  and  $|S|$ is the cardinal of $S$.
\end{corollary}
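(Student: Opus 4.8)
The plan is to read this corollary as the Schwartz--Zippel probabilistic shadow of corollary~\ref{thm:optimal_bound_new}, summed over the divisors of $d$. First I would invoke corollary~\ref{thm:optimal_bound_new}, which for each divisor $m$ of $d$ produces polynomials $h_{m,i}(\alpha,t)\in k[\alpha,t]$, $i=1,\ldots,d-d/m$, of total degree at most $md^2+2d$, with the property that $f(t^\ast+\alpha^\ast x,x)$ is indecomposable in $k[x]$ as soon as, for each $m$, some $h_{m,i}(\alpha^\ast,t^\ast)$ is nonzero. Equivalently, $f(t^\ast+\alpha^\ast x,x)$ can fail to be indecomposable only if it is $m$-decomposable for some non-trivial divisor $m$ of $d$ (i.e.\ $1<m<d$), and such $m$-decomposability forces $h_{m,i}(\alpha^\ast,t^\ast)=0$ for every $i$.

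The key preliminary point, which I would establish next, is that for each non-trivial divisor $m$ at least one of the polynomials $h_{m,i}$ is \emph{not} the zero polynomial. Indeed, $f$ is indecomposable in $k[t,x]$ and $p=0$ or $p>d$, so by theorem~\ref{th:main_r=1} the polynomial $f(t+\alpha x,x)$ is indecomposable in $\overline{k(\alpha,t)}[x]$; hence it is not $m$-decomposable there, the remainder of its $m$-decomposition is nonzero, and thus one coefficient $h_{m,i_0}$ is a nonzero element of $k[\alpha,t]$.

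Then I would apply the Schwartz--Zippel lemma. A nonzero bivariate polynomial of total degree $D$ over $k$ vanishes at no more than $D\,|S|$ points of $S\times S$, so a uniformly random pair $(\alpha,t)\in S\times S$ satisfies $h_{m,i_0}(\alpha,t)=0$ with probability at most $(md^2+2d)/|S|$. Since $m$-decomposability requires $h_{m,i_0}(\alpha,t)=0$, a union bound over the non-trivial divisors, enlarged harmlessly to a sum over all divisors of $d$, bounds the probability that $f(t+\alpha x,x)$ is decomposable by
$$\sum_{m\mid d}\frac{md^2+2d}{|S|}=\frac{d^2\sum_{m\mid d}m+2d\,\sigma_0(d)}{|S|}=\frac{\sigma_1(d)\,d^2+2\sigma_0(d)\,d}{|S|}=\frac{\mathcal{D}}{|S|}.$$
Taking complements gives the asserted lower bound $1-\mathcal{D}/|S|$.

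I expect no real obstacle here; the statement is essentially a packaging of corollary~\ref{thm:optimal_bound_new}. The two points needing a little care are the nonvanishing of some $h_{m,i}$ for each relevant $m$ (supplied by theorem~\ref{th:main_r=1}), and the observation that extending the union bound from the non-trivial divisors to all divisors of $d$ is precisely what turns the estimate into the clean arithmetic expression $\mathcal{D}=\sigma_1(d)d^2+2\sigma_0(d)d$, at the only cost of a slightly weaker (still valid) bound.
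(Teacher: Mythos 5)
Your proof is correct and follows essentially the same route as the paper: both reduce the statement to corollary~\ref{thm:optimal_bound_new} plus the Zippel--Schwartz lemma, the only cosmetic difference being that the paper bounds the bad set by the zero locus of the single product $\prod_{m\mid d}h_{m,i_0}$ of total degree $\mathcal{D}$, whereas you apply Zippel--Schwartz to each $h_{m,i_0}$ separately and take a union bound --- the two give identical estimates. You also make explicit a point the paper leaves implicit, namely that for each relevant $m$ some $h_{m,i_0}$ is a nonzero polynomial (via theorem~\ref{th:main_r=1}), which is indeed needed for the Zippel--Schwartz step to say anything.
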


\begin{proof}
$f(t^{\ast}+\alpha^{\ast}x,x)$ is indecomposable in $k[x]$, if for all $m|d$ there exists $i_0$ such that $h_{m,i_0}(\alpha^{\ast},t^{\ast})=0$. Moreover $\bigcup_{m|d}\bigcap_i \{h_{m,i}(\alpha^{\ast},t^{\ast})=0\}$ is a subset of $\{\prod_{m|d}h_{m,i_0}(\alpha^{\ast},t^{\ast})=0\}$. As $\deg(h_{m,i_0}) \leq md^2+2d$, by Zippel-Schwartz's lemma, see e.g. \cite[lemma 6.44]{GaGe}, we have the desired result.
\end{proof}


\end{document}